\documentclass[11pt]{amsart}
\usepackage{amssymb, amsthm, epsfig, verbatim, amscd, enumerate, stmaryrd,
amsmath}

\textwidth=15.7cm
\textheight=22cm
\hoffset=-1.7cm
\voffset=-1cm

\newtheorem{thm}{Theorem}[section]
\newtheorem{cor}[thm]{Corollary}
\newtheorem{lem}[thm]{Lemma}

\theoremstyle{definition}
\newtheorem{defn}[thm]{Definition}

\newtheorem{rem}[thm]{Remark}
\newtheorem{que}[thm]{Question}

\theoremstyle{remark}

\numberwithin{equation}{section}

\mathsurround=1pt
\setlength{\parindent}{2em}
\setlength{\parskip}{0ex}

%
%

%
%

\newcommand{\al}{\alpha}
\newcommand{\be}{\beta}
\newcommand{\ga}{\gamma}

\newcommand{\de}{\delta}
\newcommand{\ep}{\varepsilon}

\newcommand{\la}{\lambda}

\newcommand{\si}{\sigma}
\newcommand{\Si}{\Sigma}

\newcommand{\Legknot}{{\mathcal{L}}}
\newcommand{\Frontproj}{{\overline {\mathcal{L}}}}
%
%

\newcommand{\x}{\times}

\newcommand{\R}{\mathbb R}

\newcommand{\del}{\partial}

\newcommand{\co}{\colon\thinspace} 

\newcommand{\rank}{\operatorname{rank}}

%
%

\begin{document}
\mathsurround=1pt 
\title[Maps on $3$-manifolds]
{Maps on $3$-manifolds given by surgery}

\keywords{Stable map, $3$-manifold, surgery, negative knot, Thurston-Bennequin number.}

\thanks{2010 {\it Mathematics Subject Classification}.
Primary 57R45; Secondary 57M27.}

\author{Boldizs\'ar Kalm\'ar}
\address{Alfr\'ed R\'enyi Institute of Mathematics,
Hungarian Academy of Sciences \newline
Re\'altanoda u. 13-15, 1053 Budapest, Hungary}
\email{bkalmar@renyi.hu}

\author{Andr\'as I. Stipsicz}
\address{Alfr\'ed R\'enyi Institute of Mathematics,
Hungarian Academy of Sciences \newline
Re\'altanoda u. 13-15, 1053 Budapest, Hungary and \newline
Institute for Advanced Study, Princeton, NJ, 08540}
\email{stipsicz@renyi.hu}


\begin{abstract}
Suppose that the $3$-manifold $M$ is given by integral surgery along a
link $L\subset S^3$.  In the following we construct a stable map from
$M$ to the plane, whose singular set is canonically oriented.  We
obtain upper bounds for the minimal numbers of crossings and non-simple
singularities and of connected components of fibers of stable maps
from $M$ to the plane in terms of properties of $L$.
\end{abstract}

\maketitle

\section{Introduction}
\label{sec:intro}
It is well-known that a continuous map between smooth manifolds can be approximated by a smooth map and any smooth map on a 3-manifold can be approximated by a generic stable  map.
This line of argument, however, gives no concrete map on a given 
3-manifold $M$ even if  it is given by some explicit construction. 
Recall that 
by \cite{Lic0, Wal} a closed oriented 3-manifold  $M$ can be given by integral surgery
along some link  $L$ in $S^3$. 
In the present work we construct an explicit  stable map $F \co M \to \R^2$ based on such a 
surgery presentation of $M$.

Results in  
\cite{Gr09, Gr10} give lower bounds  
 on the topological complexity of the set of critical 
values  
of generic smooth maps and
on the complexity of the fibers
 in terms of the topology of the source and target manifolds.
In a slightly different direction,
\cite{CoTh} gives a lower bound for the number of crossing singularities 
of stable maps from a $3$-manifold to $\R^2$ in terms of the Gromov norm of the $3$-manifold.
Recently \cite{Ba08, Ba09} and \cite{GK07} 
studied the topology of $4$-manifolds through the singularities of their  maps into surfaces.

In the present paper we give upper bounds on the minimal numbers of
the crossings and non-simple singularities and of the connected
components of the fibers of stable maps on the $3$-manifold $M$ in
terms of properties of diagrams of $L$ (e.g.\ the number of crossings
or the number of critical points when projected to $\R$).  As an
additional result, these constructions lead to upper bounds on a
version of the Thurston-Bennequin number of negative Legendrian knots.

Before stating our main results, we need a little preparation.
First of all, a stable map of a $3$-manifold into the plane can be easily 
described by its Stein factorization.
 
\begin{defn}
Let
$F$ be a  map of the $3$-manifold $M$ into $\R^2$.
Let us call two points  $p_1, p_2 \in M$ {equivalent} if and only if 
$p_1$ and $p_2$ lie on the same component of an $F$-fiber.
Let $W_F$ denote the quotient space of $M$ with respect
to this equivalence relation and $q_F \co M \to W_F$ the quotient map.
Then there exists a unique continuous map $\Bar{F} \co W_F \to \R^2$ such that
$F = \Bar{F} \circ q_F$. The space $W_F$ or the factorization of the 
map $F$ into the composition of $q_F$ and $\Bar{F}$ is called the \emph{ Stein
factorization} of the map $F$. (Sometimes the map $\Bar{F}$ is also called the  
{Stein factorization} of  $F$.) 
\end{defn}

In other words, the Stein factorization $W_F$ is the space of
connected components of fibers of $F$.  Its structure is strongly
related to the topology of the $3$-manifold $M$. For example, an
immediate observation is that the quotient map $q_F \co M \to W_F$
induces an epimorphism between the fundamental groups since every loop
in $W_F$ can be lifted to $M$.  If $F \co M \to \R^2$ is a stable map,
then its Stein factorization $W_F$ is a $2$-dimensional CW complex.
The local forms of Stein factorizations of proper stable maps of
orientable $3$-manifolds into surfaces are described in \cite{KuLe,
  Lev}, see Figure~\ref{st2}.  Indeed, let $F$ be a stable map of the
closed orientable $3$-manifold $M$ into $\R^2$.  We say that a
singular point $p \in M$ of $F$ is of type {\sc (a)}, \ldots, {\sc
  (e)}, respectively, if the Stein factorization $\bar F$ at $q_F(p)$
looks locally like (a), \ldots, (e) of Figure~\ref{st2}, respectively.
We will call a point $w \in W_F$ a singular point of type {\sc (a)},
\ldots, {\sc (e)}, respectively, if $w = q_F(p)$ for a singular point
$p \in M$ of type {\sc (a)}, \ldots, {\sc (e)}, respectively.
According to \cite{KuLe, Lev} we give the following characterization
of the singularities of $F$: The singular point $p$ is a {\it cusp}
point if and only if it is of type {\sc (c)}, the singular point $p$
is a {\it definite fold} point if and only if it is of type {\sc (a)}
and $p$ is an {\it indefinite fold} point if and only if it is of type
{\sc (b)}, {\sc (d)} or {\sc (e)}.  Singular points of types {\sc (d)}
and {\sc (e)} are called {\it non-simple}, while the others are called
{\it simple}.  A double point in $\R^2$ of two crossing images of
singular curves which is not an image of a non-simple singularity is
called a {\it simple singularity crossing}.  A {simple singularity
  crossing} or an image in $\R^2$ of a non-simple singularity is
called a {\it crossing singularity}.  A stable map is called a
\emph{fold map} if it has no cusp singularities.

\begin{figure}[ht] 
\begin{center} 
\epsfig{file=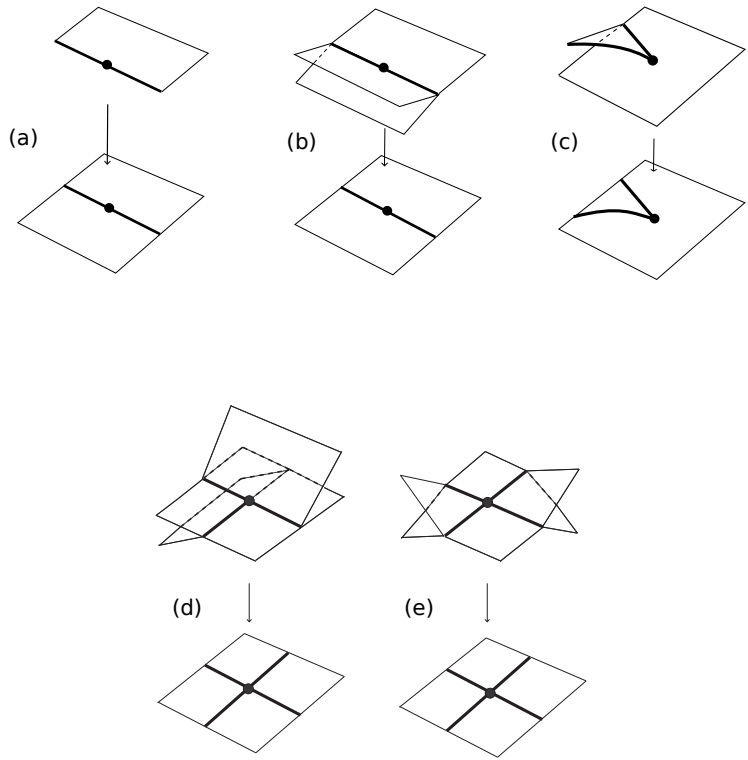, height=11cm} 
\end{center} 
\caption{{\bf The local forms of Stein factorizations of stable maps from orientable $3$-manifolds to surfaces.} The map (symbolized by an arrow) maps from the CW complex $W_F$ to $\R ^2$.}  
\label{st2}  
\end{figure} 

Let $L\subset \R ^3\subset S^3$ be a given link, and let
 $\overline L$ denote a  generic projection of it to the plane.
 Let ${\rm {n}}( L)$ and ${\rm {cr}}(\overline L)$ denote
 the number of components of $L$ and
   the number of crossings of $\overline L$, respectively.  
   
      Choose a direction in $\R^2$, which we represent by a vector $v \in \R^2$.
We can assume that $v$ satisfies the condition that 
the projection of the diagram $\overline L$ to $\R v^{\bot}$ along $v$ 
yields only 
non-degenerate critical points. 
Let 
${\rm {t}}(\overline L) = {\rm {t}}_v(\overline L)$ denote the number of  times 
$\overline L$ is tangent to  $v$.
Suppose at each $v$-tangency $p$ the half line 
 emanating from $p$ in the direction of $v$ avoids the crossings of $\overline L$ and
 intersects $\overline L$ transversally (at the points different from $p$).
 Denote the number of transversal intersections by 
 $\ell(\overline L, v, p)$. 
Let $\ell(\overline L, v)$ denote the maximum of the values $\ell(\overline L, v, p)$, where
$p$ runs over the $v$-tangencies.
With these definitions in place now we can state the main result of the paper.

\begin{thm}\label{fothm}
Suppose that the 3-manifold  $M$
is obtained by integral  surgery on the link $L\subset S^3$.
Then there is a stable map $F \co M \to \R^2$
such that 
\begin{enumerate}[\rm (1)]
\item
the Stein factorization $W_F$ is homotopy equivalent 
to the bouquet $\bigvee_{i=1}^{{\rm {n}}( L)} S^2$,
\item
the number of cusps of $F$ is equal to 
${\rm t}_v(  \overline L )$,
\item
all the non-simple singularities of $F$ are
 of type {\rm \sc(d)}, and their number 
  is equal to 
${\rm cr}(  \overline L ) + \frac{3}{2}{\rm t}_v(  \overline L ) -{\rm n}(L)$,
\item
the number 
of non-simple singularities which are not connected by any singular arc of type {\rm \sc(b)}
to any cusp is equal to
${\rm cr}(  \overline L ) + \frac{1}{2}{\rm t}_v(  \overline L ) -{\rm n}(L)$,
\item
the number of simple singularity crossings of $F$ in $\R^2$ is no more than
$$8{\rm cr}(  \overline L )  + 6\ell(\overline L, v){\rm t}_v(  \overline L ) + {\rm t}_v(  \overline L )^2,$$
\item
the number of connected components of the singular set of $F$ is no more than ${\rm {n}}( L) + \frac{3}{2}{\rm t}_v(\overline L) +1$, and
\item
the maximal number of the connected components of any fiber of $F$ is no more than 
${\rm t}_v(  \overline L ) + 3$.
\item 
Suppose we got $M$ by cutting out and gluing back 
the regular neighborhood $N_L$ of $L$ from $S^3$.
Then
the indefinite fold singular set of $F$ 
contains a link 
in $S^3 - N_L$,
which is isotopic to $L$ in $S^3$
and whose $F$-image coincides with 
$\overline L$. 
\end{enumerate} 
\end{thm}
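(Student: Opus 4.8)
The plan is to obtain statement~(8) directly from the construction of $F$ carried out below, by keeping track throughout of a fixed parallel pushoff $L'$ of $L$ and arranging that $L'$ stays inside the indefinite fold singular set of $F$ with $F(L')=\overline L$; then the three assertions of~(8)---that $L'$ lies in $S^3-N_L$, that it is isotopic to $L$ in $S^3$, and that its image is $\overline L$---follow at once.

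First I would fix a convenient position for $L$: isotope $L$ inside $\R^3\subset S^3$ into a thin slab $\R^2\x(-\de,\de)$ so that the vertical projection $\R^3\to\R^2$ realises the diagram $\overline L$, with the two branches over each crossing separated in the last coordinate and with the standing assumptions on $\overline L$ and on the direction $v$ respected. Take the surgery tube $N_L=L\x D^2$ inside this slab, and let $L'$ be a pushoff of $L$ obtained by pushing it off itself in the vertical direction, chosen so that $L'\subset S^3-N_L$; then $L'$ still projects onto $\overline L$.

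In the construction of $F$, the piece $S^3-\mathrm{int}\,N_L$ is unaffected by the surgery, so it embeds in $M$, and there $F$ is produced by a modification of a standard stable map on $S^3$ that is supported near $L$, extended over the re-glued solid tori. This leaves enough freedom to prescribe $F$ on a tubular neighbourhood $U\cong L'\x\R^2$ of $L'$: outside finitely many small balls I take $F|_U$ to be a suspended indefinite fold, $(s,x,y)\mapsto(s,x^2-y^2)$ in suitable coordinates, with $L'$ equal to the fold locus $\{x=y=0\}$, so that along these parts $L'$ consists of indefinite fold points mapping onto $\overline L$. The exceptional balls lie over a crossing of $\overline L$ (where two fold sheets meet and their images give a simple singularity crossing of $F$) or over a $v$-tangency of $\overline L$ (where a cusp is created on a nearby sheet); in each such ball $B$ I choose the local picture---consistently with the Stein-factorization forms of Figure~\ref{st2} and with the requirements of parts~(1)--(7)---so that $L'\cap B$ again consists of indefinite fold points whose image is the corresponding arc of $\overline L$. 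Hence $L'$ lies in the indefinite fold singular set of $F$ and $F(L')=\overline L$.

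Finally, $L'$ is a parallel pushoff of $L$, so shrinking the push-off distance to $0$ gives an ambient isotopy of $S^3$ carrying $L'$ to $L$; this isotopy is undisturbed by the local modifications above, since each of them is supported in a ball inside which $L'$ stays $C^0$-close to $L$. Thus $L'$ is isotopic to $L$ in $S^3$, and~(8) is proved. The hard part is the coherence issue hidden in the third paragraph: a single map $F$ must simultaneously display $L'$ as indefinite fold and enjoy properties~(1)--(7), so the local models at the crossings and at the $v$-tangencies must be pinned down once and for all during the main construction; once that is done, verifying~(8) is only a matter of inspecting those models.
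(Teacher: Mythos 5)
There is a genuine gap: you have not actually constructed the map $F$, and the construction is the content of the theorem. Your argument addresses only part~(8), treats parts~(1)--(7) as automatic consequences of an unspecified ``main construction,'' and then---as you yourself admit in your last paragraph---leaves the compatibility of your local prescription near $L'$ with that global construction entirely unresolved. In a stable map the singular set and its image in $\R^2$ are globally constrained (the immersion of the singular curve must close up, cusps must appear and disappear in pairs, etc.), so you cannot simply decree that $F|_U$ be a suspended indefinite fold on a tubular neighborhood $U$ of a pushoff $L'$ and that the local pictures at crossings and $v$-tangencies be ``consistent with~(1)--(7)''; you must exhibit a global map in which these local models actually occur, and that is where all the work is.

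Concretely, the paper's proof is a seven-step algorithm on maps $f_0,\dots,f_6\co S^3\to\R^2$ followed by the surgery step, using the wrinkle Lemma~\ref{wrink} to make $L$ itself into an indefinite fold locus in Step~1 (with $f_1|_L=f_0|_L$ equal to the original diagram projection), then introducing ``flappers'' at the $v$-tangencies (Step~2), eliminating half the cusps along definite-fold arcs $\de$ (Step~3), lifting those arcs away from $\overline L$ (Step~4), decomposing $W_{f_4}$ along a line $l$ and proving via Lemmas~\ref{boldlong} and~\ref{boldlongneigh} that $q_{f_4}^{-1}(A)$ is a tubular neighborhood of $L$ (Step~5), eliminating the remaining cusps to produce a definite-fold link $K$ that embeds under $f_6$ and is isotopic to $L$ by Lemma~\ref{vegsoiso} (Step~6), and finally surgering along $K$ (Step~7). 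The quantities in (2)--(7) are then read off by counting the singular arcs, flappers, and cusp eliminations produced at each stage. None of this machinery appears in your proposal. Moreover, your roles of $L$ and the pushoff are reversed relative to the paper: the paper keeps $L$ as the indefinite fold curve and pushes off to find the curve $K$ along which the surgery is performed (crucially, $K$ is a \emph{definite} fold curve with embedded image, so the surgery torus carries a trivial $D^2$-bundle projection that extends $F$ over the glued-back solid torus), whereas you keep the surgery at $L$ and push off to get your indefinite fold curve $L'$. With your arrangement you still need to explain why $F|_{N_L}$ extends over the regluing; the paper solves this precisely by ensuring the surgery is done along a definite-fold circle with embedded image, and your sketch gives no substitute for that argument.
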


\begin{rem}\noindent
\begin{enumerate}
\item
Let $Y$ be a closed orientable $3$-manifold, $f$ a given smooth map of
$Y$ into $\R^2$ and $L \subset Y$ a link disjoint from the singular
set of $f$.  Suppose furthermore that $f|_L$ is an immersion.  Let $M$
denote the $3$-manifold obtained by some integral surgery along $L$.
Then the method developed in the proof of Theorem~\ref{fothm} provides
a stable map of $M$ into $\R^2$ (relative to $f$).
\item
In constructing the map $F$, the proof of Theorem~\ref{fothm} provides
a sequence of stable maps $f_0, f_1, \ldots, f_6$ of $S^3$ into
$\R^2$, where each $f_i$ is obtained from $f_{i-1}$ by some
deformation, $i=1, \ldots, 6$.  Finally, the map $F$ is obtained from
$f_6$.  Suppose that $X$ is a compact $4$-manifold which admits a handle
decomposition with only $0$- and $2$-handles, i.e.  $X$ can be given
by attaching 4-dimensional 2-handles to $D^4$ along $S^3$.  Using our
method we can construct a stable map $G$ of $X$ into $\R^2 \x [0,1]$.
\end{enumerate}
\end{rem}

Recall that according to \cite{BuRh} a closed orientable $3$-manifold
$M$ has a stable map into $\R^2$ without singularities of types {\sc
  (b)}, {\sc (c)}, {\sc (d)} and {\sc (e)} if and only if $M$ is a
connected sum of finitely many copies of $S^1 \x S^2$.  According to
\cite{Sa} a closed orientable $3$-manifold $M$ has a stable map into
$\R^2$ without singular points of types {\sc (c)}, {\sc (d)} and {\sc
  (e)} if and only if $M$ is a graph manifold.  By \cite{Lev0} a
$3$-manifold always has a stable map into $\R^2$ without singular
points of type {\sc (c)}.  Our arguments imply a constructive proof
for

\begin{thm}\label{foldcsak(e)}
Every closed orientable  $3$-manifold
has a stable map into $\R^2$
without singular points of types {\rm \sc (c)} and {\rm \sc (e)}. 
\end{thm}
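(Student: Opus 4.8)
The plan is to start from the stable map furnished by Theorem~\ref{fothm} and then to remove its cusps by local modifications, while being careful that these modifications do not create singular points of type {\sc (e)}.

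First I would fix an integral surgery presentation $L\subset S^3$ of the given closed orientable $3$-manifold $M$, a generic diagram $\overline L$ and a generic direction $v$, and apply Theorem~\ref{fothm}. This produces a stable map $F_0\co M\to\R^2$ all of whose non-simple singularities are of type {\sc (d)} — so $F_0$ has no singular points of type {\sc (e)} to begin with — with exactly $t:={\rm t}_v(\overline L)$ cusps, each cusp being joined by a singular arc of type {\sc (b)} to a non-simple singularity of type {\sc (d)}; comparing parts (3) and (4) of Theorem~\ref{fothm} shows there are precisely $t$ such type {\sc (d)} points. Thus the whole problem is reduced to eliminating the cusps of $F_0$.

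Next, near each cusp $c$ (or, since $t$ is even, near each pair of cusps) I would perform a homotopy of $F_0$ supported in a small neighbourhood of the cuspidal edge at $c$ together with the arc of type {\sc (b)} joining it to the adjacent type {\sc (d)} point, which removes the cusp. The model modification can be described on Stein factorizations using the local list of Figure~\ref{st2}: the piece of $W_{F_0}$ around a cusp (picture (c)) is replaced by a piece assembled only from pictures (a), (b) and (d), so that in $\R^2$ the semicubical cusp of the discriminant is opened up into two transversally crossing images of indefinite fold arcs, the new crossing being a non-simple singularity of type {\sc (d)}. Performing this at every cusp and then perturbing to restore stability yields a stable map $F\co M\to\R^2$ whose singular set consists only of fold points of types {\sc (a)}, {\sc (b)} and non-simple points of type {\sc (d)}; in particular $F$ has no singular points of types {\sc (c)} and {\sc (e)}. (That cusps of a map of a $3$-manifold to the plane can be removed at all is \cite{Lev0}; the new point is that, starting from the controlled map $F_0$, one can do the removal without ever leaving the class of maps free of type {\sc (e)} singularities.)

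I expect the genuine obstacle to be precisely this last control: one must verify that opening up a cuspidal edge as above never forces a tangential or multi-sheeted (type {\sc (e)}) configuration of the indefinite fold sheets of $W_F$, and that the local move is compatible with the type {\sc (d)} singularity already sitting at the end of the adjoining type {\sc (b)} arc. Once the model move is pinned down explicitly on Stein factorizations, checking stability and the vanishing of the counts of type {\sc (c)} and type {\sc (e)} points for the resulting $F$ is routine.
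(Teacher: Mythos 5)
The overall strategy is the same as the paper's: start from the map produced by Theorem~\ref{fothm} (which already has no type {\sc (e)} points) and then kill its cusps without creating type {\sc (e)} points. The execution, however, has a genuine gap.

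Your proposed \emph{local} move does not exist. Near a cusp the singular set of a stable map of a $3$-manifold to the plane is a single smooth arc that is a definite fold on one side of the cusp point and an indefinite fold on the other; this is exactly what the type {\sc (c)} picture encodes. If you try to replace a small neighbourhood of a cusp by a configuration built only out of types {\sc (a)}, {\sc (b)} and {\sc (d)}, you must still match the boundary data, namely an incoming arc of definite fold on one side and an incoming arc of indefinite fold on the other; but none of the types {\sc (a)}, {\sc (b)} or {\sc (d)} allows a fold arc to change from definite to indefinite, since the cusp is the \emph{only} local model that does so. Hence a single cusp cannot be removed by a homotopy supported near it, regardless of what fold crossings one is willing to create, and the statement that the cusp ``is opened up into two transversally crossing images of indefinite fold arcs'' has no realization. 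Levine's elimination theorem (which you cite) is emphatically a \emph{global}, pairwise cancellation: two cusps whose types are opposite along a connecting fold arc can be cancelled together, but one cusp cannot be annihilated in isolation.

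The paper avoids exactly this pitfall: it observes that the number of cusps produced by Theorem~\ref{fothm} is even and that they all sit over the $v$-tangencies of $\overline L$ in the band $B\subset W_F$, so they can be grouped into pairs; Lemma~\ref{cuspmovelem} is then used to push each cusp of a pair across indefinite fold arcs toward its partner (which only creates new type {\sc (d)} points, never type {\sc (e)}), and Lemma~\ref{cuspelimlem} cancels the resulting adjacent pair. You would need to replace your proposed local move by precisely this pairing-and-cancellation argument; the ``even number of cusps sitting in $B$'' structure coming from the construction of Theorem~\ref{fothm} is what makes the pairing available, and the fact that the moving lemma only introduces type {\sc (d)} crossings is what gives the required control on type {\sc (e)}. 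Your side remark that parts (3) and (4) of Theorem~\ref{fothm} give exactly $t$ type {\sc (d)} points adjacent to cusps is fine, but it is not needed for the proof and does not by itself establish a one-to-one correspondence between cusps and adjacent type {\sc (d)} points.
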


\begin{rem}\noindent
\begin{enumerate}
\item
One cannot expect to eliminate the singular points of types {\sc (a)},
{\sc (b)} or {\sc (d)} of stable maps from arbitrary closed orientable
$3$-manifolds to $\R^2$.  In this sense our Theorem~\ref{foldcsak(e)}
gives the best possible elimination on $3$-manifolds.
 \item
By taking an embedding $\R^2 \subset S^2$ we get for every closed
orientable $3$-manifold a stable map into $S^2$ as well without
singular points of types {\rm \sc (c)} and {\rm \sc (e)}.  Then by
using the method of \cite{Sa06}, for example, for eliminating the
singular points of type {\rm \sc (a)}, we get a stable map, which is a
direct analogue of the indefinite generic maps appearing in
\cite{Ba08, Ba09, GK07}.
\end{enumerate}
\end{rem}

The construction also implies certain relations between quantities one can naturally 
associate to stable maps and to surgery diagrams.

\begin{defn}\label{stabmapquant}
{Suppose that $M$ is a fixed closed, oriented $3$-manifold and $F \co
  M \to \R^2$ is a stable map with singular set $\Si$.
\begin{itemize} 
\item Let ${\rm s}( F )$ denote the number of simple singularity
  crossings of $F$.
\item
Let ${\rm ns}( F )$ denote the  number of non-simple singularities of $F$.
\item
Let ${\rm d}(F)$ denote the number of crossing singularities of $F$.
Clearly ${\rm s}( F ) + {\rm ns}( F ) = {\rm d}(F)$.
\item
Let ${\rm nsnc}( F )$ denote
the number 
of non-simple singularities of $F$ which are not connected by any singular arc
of type {\sc  (b)}
to any cusp.
\item
Let ${\rm c}(F)$ 
denote the number of cusps of $F$. Clearly ${\rm nsnc}( F ) + {\rm c}(F) \geq {\rm ns}( F )$.
\item
Let ${\rm cc}(F)$ denote the  number of  connected components of $F(\Si)$.
Clearly it is no more than the number of connected components of $\Si$.
\item Let ${\rm cf}(F)$ denote the  maximum number of  connected components of the fibers of $F$.
\end{itemize}}
\end{defn}

The inequality  
\[
\rank H_*(M) \leq 2{\rm d}(F) + {\rm c}(F) + 2{\rm cc}(F)
\]
has been shown to hold in \cite[Section~2.1]{Gr09}.\footnote{The paper \cite{Sa95} is also closely related.}
 In addition, by \cite[Theorem~3.38]{CoTh}  
we have ${\rm d}(F) \geq || M || / 10$, 
where $|| M ||$ is the Gromov norm of $M$, cf. also \cite[Section 3]{Gr09}.

Theorem~\ref{fothm} provides several estimates for upper bounds on the
topological complexity of smooth maps of a $3$-manifold given by
surgery. For example, by summing quantities in
Definiton~\ref{stabmapquant} and their estimates in
Theorem~\ref{fothm}, we immediately obtain

\begin{cor}
Suppose that the 3-manifold  $M$
is obtained by integral  surgery on the link $L \subset S^3$. 
Let $\overline L$ be any diagram of $L$ and $v$ a general position vector in $\R^2$.
Then
\begin{itemize}
\item
$\min {\rm d}(F) \leq 9{\rm cr}(\overline L) + (6\ell(\overline L,
  v)+\frac{3}{2}){\rm t}_v( \overline L ) + {\rm t}_v( \overline L )^2 -{\rm
    n}(L)$,
\item
$\min {\rm cf}(F) \leq  {\rm t}_v(  \overline L ) +3$,
\item
$\min \{ 2{\rm d}(F) + {\rm c}(F) + 2{\rm cc}(F) \} \leq 18{\rm
  cr}(\overline L) + (12\ell(\overline L, v)+7){\rm t}_v( \overline L ) + 2{\rm t}_v(
  \overline L )^2 + 2$,
\end{itemize}
where the minima are taken for all the stable maps $F$ of $M$ into
$\R^2$.  Evidently, we can estimate other properties in
Definiton~\ref{stabmapquant} of stable maps on $M$ as well.
\end{cor}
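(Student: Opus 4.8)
The plan is to derive all three inequalities by feeding the single stable map $F \co M \to \R^2$ produced by Theorem~\ref{fothm} into the elementary identities and inequalities recorded in Definition~\ref{stabmapquant}. Since each bound will be established for this particular $F$, it holds a fortiori for the minimum taken over all stable maps of $M$ into $\R^2$. Throughout, fix the diagram $\overline L$ and the general position vector $v$ appearing in the statement, and work with the associated map $F$.

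First I would prove the bound on $\min {\rm d}(F)$. By Definition~\ref{stabmapquant}, ${\rm d}(F) = {\rm s}(F) + {\rm ns}(F)$. Item~(3) of Theorem~\ref{fothm} gives ${\rm ns}(F) = {\rm cr}(\overline L) + \frac{3}{2}{\rm t}_v(\overline L) - {\rm n}(L)$ (all non-simple singularities being of type {\sc (d)}, so this is the full count), while item~(5) gives ${\rm s}(F) \le 8{\rm cr}(\overline L) + 6\ell(\overline L, v){\rm t}_v(\overline L) + {\rm t}_v(\overline L)^2$; adding the two yields exactly $9{\rm cr}(\overline L) + (6\ell(\overline L, v) + \frac{3}{2}){\rm t}_v(\overline L) + {\rm t}_v(\overline L)^2 - {\rm n}(L)$. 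The second inequality, on $\min {\rm cf}(F)$, is nothing but item~(7) of Theorem~\ref{fothm} restated. For the third inequality I would bound the three summands of $2{\rm d}(F) + {\rm c}(F) + 2{\rm cc}(F)$ separately: $2{\rm d}(F)$ is at most twice the bound just obtained, i.e.\ $18{\rm cr}(\overline L) + (12\ell(\overline L, v) + 3){\rm t}_v(\overline L) + 2{\rm t}_v(\overline L)^2 - 2{\rm n}(L)$; one has ${\rm c}(F) = {\rm t}_v(\overline L)$ by item~(2); and ${\rm cc}(F)$, being no larger than the number of connected components of the singular set by Definition~\ref{stabmapquant}, is at most ${\rm n}(L) + \frac{3}{2}{\rm t}_v(\overline L) + 1$ by item~(6), so that $2{\rm cc}(F) \le 2{\rm n}(L) + 3{\rm t}_v(\overline L) + 2$. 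Summing the three bounds, the $\pm 2{\rm n}(L)$ terms cancel, the coefficients of ${\rm t}_v(\overline L)$ add up to $12\ell(\overline L, v) + 3 + 1 + 3 = 12\ell(\overline L, v) + 7$, and what remains is $18{\rm cr}(\overline L) + (12\ell(\overline L, v) + 7){\rm t}_v(\overline L) + 2{\rm t}_v(\overline L)^2 + 2$, as asserted. The closing sentence of the corollary is then immediate, since every remaining quantity of Definition~\ref{stabmapquant} --- for instance ${\rm nsnc}(F)$ --- is estimated directly by the corresponding clause of Theorem~\ref{fothm} (here item~(4)).

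Since the corollary is a purely arithmetic consequence of Theorem~\ref{fothm}, I do not expect a genuine obstacle; the only points deserving a moment's care are consistency checks, e.g.\ verifying that items~(2), (3), (4) are compatible with the inequality ${\rm nsnc}(F) + {\rm c}(F) \ge {\rm ns}(F)$ of Definition~\ref{stabmapquant}. Indeed, $({\rm cr}(\overline L) + \frac{1}{2}{\rm t}_v(\overline L) - {\rm n}(L)) + {\rm t}_v(\overline L) = {\rm cr}(\overline L) + \frac{3}{2}{\rm t}_v(\overline L) - {\rm n}(L) = {\rm ns}(F)$, so the inequality is in fact an equality for this $F$, which is consistent, and confirms that the numerical estimates obtained above are internally coherent.
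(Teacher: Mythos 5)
Your proposal is correct and is exactly the argument the paper intends: the corollary is stated as an immediate consequence of summing the estimates of Theorem~\ref{fothm} for the constructed map $F$, and your arithmetic (including the cancellation of the $\pm 2{\rm n}(L)$ terms in the third bound) checks out.
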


These expressions can be simplified by estimating $\ell(\overline L, v)$ as
\begin{equation}\label{eq:egysz}
\ell(\overline L, v) \leq {\rm t}_v(  \overline L ) -1
\end{equation}
cf. Lemma~\ref{egyszerusit}.

The number of tangencies of a projection of a knot in a fixed
direction is reminiscent to the number of cusp singularities of a
front projection of a Legendrian knot in the standard contact
3-space. Based on this analogy, our previous results imply an estimate
on a quantity attached to a Legendrian knot in the following way.

Recall first that the standard contact structure $\xi _{st}$ on $\R^3
$ is the 2-plane field given by the kernel of the 1-form $\alpha =dz
+xdy$. A knot $\Legknot$ is \emph{Legendrian} if the tangent vectors of $\Legknot$
are in $\xi _{st}$. (To indicate the Legendrian structure on the knot, we will denote it
by $\Legknot$ and reserve the notation $L$ for smooth knots and links.)
If $\Legknot$ is chosen generically within its Legendrian
isotopy class, its projection to the $(y,z)$ plane will have no
vertical tangencies, and at any crossing the strand with smaller slope
will be over the one with higher slope. Consider now a Legendrian knot
$\Legknot $ and let $\Frontproj$ denote such a projection (called a
\emph{front} projection) of $\Legknot$. The \emph{Thurston-Bennequin number}
${\rm {tb}}(\Legknot )$ of $\Legknot $ is given by the formula $w(\Frontproj
 )-\frac{1}{2}(\# cusps(\Frontproj ))$, where $w(\Frontproj)$
stands for the \emph{writhe} (i.e.\ the signed sum of the
double points) of the projection. Although the definition of tb$(\Legknot )$
uses a projection of the Legendrian knot $\Legknot$, it is not hard to show that 
the resulting number is an invariant of the Legendrian isotopy class of $\Legknot$.

 In case the projection has only
negative crossings, we have that $w(\Frontproj)= -{\rm
  {cr}}({\overline {L}})$, hence the resulting Thurston-Bennequin
number can be identified with $- {\rm cr}( \overline L ) - \frac{1}{2} 
{\rm   {t}}_v(\overline L)$ after choosing $v$ appropriately, cf.\ \cite{Geiges,
  OS}. (In this case the generic projection ${\overline {L}}$ used in the definitions of 
${\rm   {t}}_v(\overline L)$ and 
${\rm  {cr}}({\overline {L}})$ is derived from the front projection $\Frontproj$ by rounding the cusps.)

As it is customary, we define ${\rm {TB}}(L)$ as the maximum of all Thurston-Bennequin
numbers of Legendrian knots smoothly isotopic to  $L$. 
(It is a nontrivial fact, and follows
from the tightness of $\xi _{st}$ that this maximum exists.) A modification of this
definition for negative knots (i.e.\ for knots admitting projections with only negative crossings) provides

\begin{defn}
For a negative knot $L \subset \R^3$
let ${\rm TB}^-( L )$ denote
the value $\max \{  {\rm tb} (\Legknot)   \}$ where
$\Legknot$ runs over those   Legendrian knots smoothly isotopic to $L$ 
which admit front diagrams  with only negative crossings.
\end{defn}
\noindent It is rather easy to see that if the knot $L$ admits a projection 
with only negative crossings, then it also has a front projection with the same property.
Clearly ${\rm TB}^-( L ) \leq {\rm TB}( L )$.

\begin{thm}\label{nagyTBbecsles}
For a negative knot $L \subset \R^3$ and any
$3$-manifold $M$ obtained by an integral surgery along $L$
we have 
\begin{itemize}
\item
${\rm TB}^-( L ) \leq - \min   \frac{\sqrt{{\rm s}( F )}}{2\sqrt{7}}$,
\item
${\rm TB}^-( L ) \leq - \min  \frac{\sqrt{{\rm d}( F )}}{2\sqrt{7}}$,
\item
${\rm TB}^-( L ) \leq - \min  {\rm nsnc}( F ) - 1$,
\end{itemize}
where the minima are taken for all the stable maps $F$ of $M$ into $\R^2$.
\end{thm}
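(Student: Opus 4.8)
The plan is to derive all three inequalities directly from Theorem~\ref{fothm}, by feeding it an appropriately chosen diagram of $L$ and then doing elementary bookkeeping. First I would fix a Legendrian knot $\Legknot$ smoothly isotopic to $L$ that admits a front projection $\Frontproj$ with only negative crossings; as recalled above such $\Legknot$ exist, and ${\rm TB}^-( L )$ is the maximum of ${\rm tb}(\Legknot)$ over all of them. Rounding the cusps of $\Frontproj$ produces a generic projection $\overline L$ of $L$, and, choosing a general position vector $v\in\R^2$ as in the discussion preceding the statement of Theorem~\ref{nagyTBbecsles} (cf.\ \cite{Geiges, OS}), we may assume that ${\rm t}_v(\overline L)$ equals the number of cusps of $\Frontproj$ and that ${\rm tb}(\Legknot)=-{\rm cr}(\overline L)-\tfrac12{\rm t}_v(\overline L)$. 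Applying Theorem~\ref{fothm} to the surgery presentation of $M$ along this $\overline L$ gives a stable map $F\co M\to\R^2$. Throughout I would write $c={\rm cr}(\overline L)\geq 0$ and $t={\rm t}_v(\overline L)\geq 2$, so that $-{\rm tb}(\Legknot)=c+\tfrac12 t$.

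The third inequality then drops out of part~(4) of Theorem~\ref{fothm}: since $L$ is a knot, ${\rm n}(L)=1$, so ${\rm nsnc}(F)=c+\tfrac12 t-1=-{\rm tb}(\Legknot)-1$, whence ${\rm tb}(\Legknot)=-{\rm nsnc}(F)-1\leq-\min{\rm nsnc}(F')-1$, the minimum being taken over all stable maps $F'$ of $M$ into $\R^2$ (our $F$ is one of them). Taking the maximum over all admissible $\Legknot$ yields ${\rm TB}^-( L )\leq-\min{\rm nsnc}(F)-1$.

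For the first inequality I would combine part~(5) of Theorem~\ref{fothm} with the estimate $\ell(\overline L,v)\leq t-1$ of Lemma~\ref{egyszerusit} (see \eqref{eq:egysz}) to get ${\rm s}(F)\leq 8c+6(t-1)t+t^2=8c+7t^2-6t$, and then verify the elementary inequality ${\rm s}(F)\leq 28\bigl(c+\tfrac12 t\bigr)^2=28\,{\rm tb}(\Legknot)^2$. This amounts to $28c^2-8c+28ct+6t\geq 0$, which holds for all integers $c\geq 0$ and all $t\geq 0$ since $28c^2-8c=4c(7c-2)\geq 0$ and $28ct+6t\geq 0$. Because $\sqrt{28}/(2\sqrt7)=1$ and ${\rm tb}(\Legknot)<0$, this gives $\tfrac{\sqrt{{\rm s}(F)}}{2\sqrt7}\leq-{\rm tb}(\Legknot)$, and taking the maximum over $\Legknot$ proves the first bound. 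The second bound I would obtain in exactly the same way after adding ${\rm ns}(F)=c+\tfrac32 t-1$ from part~(3): then ${\rm d}(F)={\rm s}(F)+{\rm ns}(F)\leq 9c+7t^2-\tfrac92 t-1$, and $28\bigl(c+\tfrac12 t\bigr)^2-{\rm d}(F)\geq 28c^2-9c+28ct+\tfrac92 t+1\geq 0$, so $\tfrac{\sqrt{{\rm d}(F)}}{2\sqrt7}\leq-{\rm tb}(\Legknot)$ and one concludes as before.

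I do not expect a genuine obstacle here, since the substantive work is contained in Theorem~\ref{fothm}. The two points I would treat with care are: first, that the rounding of the cusps of $\Frontproj$ can be performed so that $v$ remains in general position with respect to $\overline L$ and satisfies the half-line transversality condition entering the definition of $\ell(\overline L,v)$, all while keeping ${\rm t}_v(\overline L)$ equal to the number of cusps; and second, the small case distinction $c=0$ versus $c\geq 1$ in the two polynomial comparisons, which is handled by the inequalities displayed above. The constant $28=4\cdot 7$ is chosen precisely so that $\sqrt{28}/(2\sqrt7)=1$, which is what lets the square-root estimates on ${\rm s}(F)$ and ${\rm d}(F)$ convert into the stated bounds on ${\rm TB}^-( L )$.
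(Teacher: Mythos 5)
Your proposal is correct and follows essentially the same route as the paper: apply Theorem~\ref{fothm} to the diagram $\overline L$ obtained by rounding the cusps of a negative front projection, use parts (3), (4), (5) together with Lemma~\ref{egyszerusit}, substitute ${\rm tb}(\Legknot)=-{\rm cr}(\overline L)-\tfrac12{\rm t}_v(\overline L)$, and close with elementary polynomial inequalities. The only cosmetic difference is that the paper derives the ${\rm s}(F)$ bound as a corollary of the ${\rm d}(F)$ bound via ${\rm d}(F)\geq{\rm s}(F)$, whereas you verify it directly; this is immaterial.
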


By Theorem~\ref{nagyTBbecsles} and 
\cite[Theorem~3.38]{CoTh} we obtain

\begin{cor}
For a negative knot $L \subset \R^3$ and any
$3$-manifold $M$ obtained by an integral surgery along $L$, we have
${\rm TB}^-( L ) \leq - \frac{\sqrt{|| M ||}}{2\sqrt{70}}$.
\end{cor}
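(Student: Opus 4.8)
The plan is to combine the second bullet of Theorem~\ref{nagyTBbecsles} with the lower bound ${\rm d}(F) \geq || M ||/10$ recorded above from \cite[Theorem~3.38]{CoTh}. Concretely, I would argue as follows. Fix any stable map $F \co M \to \R^2$. By \cite[Theorem~3.38]{CoTh} its number of crossing singularities satisfies ${\rm d}(F) \geq || M || / 10$, and since $x \mapsto \sqrt{x}$ is monotone increasing on $[0,\infty)$, this gives $\sqrt{{\rm d}(F)} \geq \sqrt{|| M ||}/\sqrt{10}$. Dividing by $2\sqrt{7}$ and taking the minimum over all stable maps $F$ of $M$ into $\R^2$ yields
\[
\min_F \frac{\sqrt{{\rm d}(F)}}{2\sqrt{7}} \;\geq\; \frac{\sqrt{|| M ||}}{2\sqrt{7}\cdot\sqrt{10}} \;=\; \frac{\sqrt{|| M ||}}{2\sqrt{70}} .
\]

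Next I would invoke Theorem~\ref{nagyTBbecsles}, whose second bullet states ${\rm TB}^-(L) \leq -\min_F \frac{\sqrt{{\rm d}(F)}}{2\sqrt{7}}$. Multiplying the previous display by $-1$ (which reverses the inequality) and chaining, I obtain
\[
{\rm TB}^-(L) \;\leq\; -\min_F \frac{\sqrt{{\rm d}(F)}}{2\sqrt{7}} \;\leq\; -\frac{\sqrt{|| M ||}}{2\sqrt{70}} ,
\]
which is precisely the asserted estimate.

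There is essentially no obstacle: the statement is a purely formal consequence of the two cited results, with $\sqrt{70} = \sqrt{7}\cdot\sqrt{10}$ the only arithmetic simplification. The one point deserving a line of care is the interchange of $\min$ and the square root, which is legitimate because $\sqrt{\cdot}$ is monotone, so $\min_F \sqrt{{\rm d}(F)} = \sqrt{\min_F {\rm d}(F)} \geq \sqrt{|| M ||/10}$; if one prefers not to presuppose that the minimum over stable maps is attained, the identical argument goes through with infima throughout. (Note also that in the degenerate case $|| M || = 0$ the inequality holds trivially, so no positivity assumptions are needed.)
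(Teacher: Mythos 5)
Your proposal is correct and follows exactly the route the paper intends: the corollary is the formal combination of the second bullet of Theorem~\ref{nagyTBbecsles} with the bound ${\rm d}(F) \geq \|M\|/10$ from \cite[Theorem~3.38]{CoTh}, together with the simplification $\sqrt{7}\cdot\sqrt{10}=\sqrt{70}$. The paper leaves this chain implicit, and your write-up (including the remark about the monotonicity of the square root and the degenerate case $\|M\|=0$) fills it in accurately.
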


\bigskip

{\bf Acknowledgements:} 
The authors were supported by OTKA NK81203 and by the
\emph{Lend\"ulet program} of the Hungarian Academy of Sciences.
The first author was  partially supported by Magyary Zolt\'an Postdoctoral Fellowship.
The authors thank the anonymous referee for the comments which improved the paper.

\section{Preliminaries} 

In this section, we recall and summarize  some technical tools.
First, we show
 that a cusp can be pushed through an indefinite fold arc as in Figure~\ref{movingcusp}:

\begin{lem}[Moving cusps]\label{cuspmovelem}
Suppose that in a neighborhood $U$ of a point $p \in M$ the Stein factorization of a map $f \co M \to \R^2$
is given by  Figure~\ref{movingcusp}(a).
Then $f$ can be deformed in this neighborhood to a map $f'$  so that
the Stein factorization of $f'$ is as the diagram of 
Figure~\ref{movingcusp}(b).
\end{lem}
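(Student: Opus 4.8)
The plan is to carry out the deformation on the level of Stein factorizations, since by the local classification of \cite{KuLe, Lev} (Figure~\ref{st2}) a codimension-zero change of the Stein factorization along a prescribed path of stable maps is realized by a deformation of $f$ itself. First I would fix notation for Figure~\ref{movingcusp}(a): near $p$ the Stein factorization consists of a cusp point (type {\sc (c)}) with its two definite-fold arcs (type {\sc (a)}) forming the outgoing boundary, together with a separate sheet of indefinite fold (type {\sc (b)}) lying ``in front of'' the cusp, i.e.\ an arc of type {\sc (b)} that the cusped region abuts but does not yet touch. The target picture (b) has the same cusp, but now the type {\sc (b)} arc passes on the other side, having swept across the cusp; the sweep necessarily creates one non-simple point of type {\sc (d)} (where the type {\sc (b)} sheet meets the definite-fold sheet emanating from the cusp) and, depending on the normalization in Figure~\ref{movingcusp}, possibly a pair of type {\sc (b)}--{\sc (b)} crossings or an {\sc (e)}-point that is then resolved.

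The key steps, in order, are: (i) isolate a bidisk neighborhood $U$ in which $f$ is a stable map whose only singularities are the cusp and the {\sc (b)}-arc, and whose fiber structure over $\partial U$ is a fixed model; (ii) describe the one-parameter family $\{\bar F_t\}_{t\in[0,1]}$ of Stein complexes interpolating (a) and (b), checking that for all but finitely many $t$ the complex $\bar F_t$ is a legitimate Stein factorization of a stable map (all vertices of type {\sc (a)}--{\sc (e)}, all edges and faces matching the local models), and that at the finitely many exceptional values the transition is one of the elementary codimension-one moves (a cusp crossing a fold, a fold triple point, a birth/death of a {\sc (b)}--{\sc (b)} tangency); (iii) invoke the relative realization statement — each such elementary move on $\bar F$ is induced by a homotopy of $f$ supported in $U$, rel $\partial U$ — to assemble the $\bar F_t$ into an actual isotopy-with-finitely-many-moves of maps $f = f_0 \rightsquigarrow f_1 = f'$; (iv) verify that $f'$ is stable and that outside $U$ nothing has changed, so the global map is deformed only in $U$ as claimed. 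Throughout, orientability of $M$ is used to guarantee that the only non-simple type that appears is {\sc (d)} and not {\sc (e)}, consistent with the figures.

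The main obstacle I expect is step (ii): one must produce the interpolating family of Stein complexes explicitly and confirm that the indefinite-fold sheet can be pushed past the cusp without forcing an {\sc (e)}-singularity or an inadmissible local configuration. Concretely, the worry is a ``collision'' in which the {\sc (b)}-sheet, the cusp, and one of the definite-fold arcs all meet at one point; one has to check that this degeneracy is avoidable by a generic choice of the path, i.e.\ that it has codimension $\geq 2$ in the space of Stein complexes, so that the family can be perturbed to avoid it. Once that transversality bookkeeping is done, the rest is the standard dictionary between Stein factorizations and stable maps from \cite{KuLe, Lev}, applied locally and glued by a partition of unity, which I would treat as routine.
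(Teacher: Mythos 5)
Your strategy is genuinely different from the paper's, and it contains a gap at the crucial step. The paper's proof works directly on the source manifold $M$: it picks the cusp point $q$, the indefinite fold arc $\al\subset M$, an embedded arc $\be'\subset\R^2$ from $f(q)$ to a point on the far side of $f(\al)$, and an arc $\be\subset M$ with $f(\be)=\be'$, $\be$ starting at $q$ and disjoint from $\al$; it then applies the explicit cusp-moving technique of Levine \cite{Lev0} in a tubular neighborhood of $\be$, observing that one type {\sc (d)} singularity is created. This is short precisely because the hard geometric work is outsourced to the concrete local model in \cite{Lev0}.

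Your proposal instead works on the target side: build a path $\{\bar F_t\}$ of abstract Stein complexes from (a) to (b), check each $\bar F_t$ is locally of the allowed types, and then ``invoke the relative realization statement'' in step (iii) to lift this path to a homotopy of $f$ supported in $U$. That realization step is not a citable routine fact — it is essentially the content of the lemma. There is no general theorem asserting that a generic one-parameter family of $2$-complexes that locally look like Stein factorizations is induced by a homotopy of stable maps rel boundary; the Stein factorization does not determine the map, and a codimension-one transition of the complex does not automatically come from a codimension-one stratum in the space of maps without exhibiting the corresponding local model on the source. (For this particular transition that local model exists and is exactly what \cite{Lev0} supplies, but you have to produce it, not assume it.) A secondary, smaller issue: your description of Figure~\ref{movingcusp}(a) has the cusp bounded by two definite fold arcs, whereas the cusp model (Figure~\ref{st2}(c)) has one definite and one indefinite fold arc meeting at the cusp; and orientability of $M$ by itself does not rule out type {\sc (e)} — it is the specific geometry of this move (the {\sc (b)}-arc crossing a definite fold arc emanating from the cusp, not crossing another indefinite fold) that makes the new singularity of type {\sc (d)}. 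To repair the proof you would replace step (iii) by the explicit arc-and-tubular-neighborhood construction from \cite{Lev0}, at which point your steps (i), (ii), (iv) collapse into bookkeeping and you recover the paper's argument.
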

\begin{proof}
Suppose $q \in M$ is the cusp singular point 
and $\al \subset M$ is the indefinite fold arc 
at hand. 
Let $x \in \R^2$ be a point
on the other side of $f(\al)$ in $f(U)$.
Connect $f(q)$ and $x$ by an embedded arc $\be'$.
Then there is an arc $\be \subset M$ 
such that
$f(\be) = \be'$,
$\be$ starts at $q$
 and $\be$ and $\al$ do not intersect.
By using the technique of \cite{Lev0} we can now deform $f$ in a small tubular neighborhood of $\be$ to achieve the
claimed map $f'$.
Note that during this move one singular point of type {\sc  (d)} appears.
\end{proof}

An analogous statement holds if we move a cusp from a $1$-sheeted region to a $2$-sheeted region.

\begin{figure}[ht] 
\begin{center} 
\epsfig{file=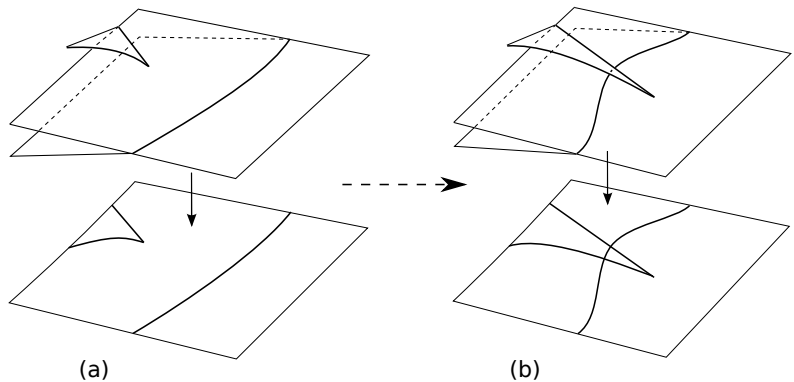, height=5cm} 
\end{center} 
\caption{{\bf Moving cusps.}
A map can be deformed so that the image of a cusp point
goes to the other side of the image of an indefinite fold arc.}
\label{movingcusp}  
\end{figure}

According to the next result, two cusps can be eliminated as in Figure~\ref{elimcusp}:
\begin{lem}[Eliminating cusps]\label{cuspelimlem}
Suppose that in a neighborhood $U$ of a point $p \in M$ the Stein factorization of a map $f \co M \to \R^2$
is given by  Figure~\ref{elimcusp}(a).
Then $f$ can be deformed in this neighborhood to a map $f'$  so that
the Stein factorization of $f'$ 
 is as the diagram of
   Figure~\ref{elimcusp}(b).
\end{lem}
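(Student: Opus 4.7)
The plan is to apply the cusp-cancellation procedure from \cite{Lev0}, specialized to the local configuration of Figure~\ref{elimcusp}(a). The figure depicts a pair of cusps of $\bar f$ that are joined in $W_f$ by an arc of images of indefinite fold points, with the two cusps ``pointing at each other'' along this arc (a beak-to-beak configuration), together with the local sheets of $W_f$ that come with such a picture. Near the arc of fold points connecting the two cusps I would fix coordinates so that $f$ coincides with a standard unfolding of two cusps; a one-parameter family $\{f_t\}$ of maps $M\to\R^2$, supported entirely in $U$ and passing through a codimension-one stratum of non-stable maps, then cancels the two cusps at once: for $t<0$ the map has the two cusps shown in Figure~\ref{elimcusp}(a), at $t=0$ they collide in a non-generic moment, and for $t>0$ only an embedded arc of indefinite fold points remains.

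The second step is to identify the Stein factorization of the resulting map $f'=f_1$. Because $f_t$ agrees with $f$ outside $U$, it suffices to track how connected components of fibers inside $U$ evolve across $t=0$. A direct inspection of the normal form shows that the two sheets coming together at the cusps separate, eliminating the cusp points and the short arc of folds that joined them, and the resulting local picture of $W_{f'}$ matches Figure~\ref{elimcusp}(b) on the nose. This is the same kind of bookkeeping already used in the proof of Lemma~\ref{cuspmovelem}, just run in reverse on a cusp pair instead of on a single cusp.

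The main obstacle is verifying that the sign/orientation conditions required for cusp cancellation actually hold. Levine's move cancels a ``beak-to-beak'' pair but not a ``lips''-type pair, and these two cases are distinguished by the co-orientations of the two cusps along the connecting fold arc. To settle this I would use the canonical orientation of the singular set (as set up in the introduction) to check that the two cusps in Figure~\ref{elimcusp}(a) lie on the same fold component of $\Sigma(f)$ with opposite co-orientations; once this is confirmed the existence of the deformation $f_t$ is a standard consequence of the transversality arguments of \cite{Lev0}, and the lemma follows. If the configuration as drawn turned out to be oriented the wrong way, one first applies Lemma~\ref{cuspmovelem} to reposition one of the cusps and then invokes the cancellation.
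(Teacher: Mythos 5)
Your proposal is correct and follows essentially the same route as the paper: the paper's proof is simply a citation of Levine's cusp elimination \cite[pages 285--295]{Lev0} specialized to $3$-dimensional source manifolds, which is exactly the move you invoke. You supply more detail (the one-parameter family, the beak-to-beak versus lips distinction, and the Stein-factorization bookkeeping) than the paper does, but the underlying argument is the same.
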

\begin{proof}
This statement is  the elimination in  \cite[pages 285--295]{Lev0} for
$3$-dimensional source manifolds. 
\end{proof}

\begin{figure}[ht] 
\begin{center} 
\epsfig{file=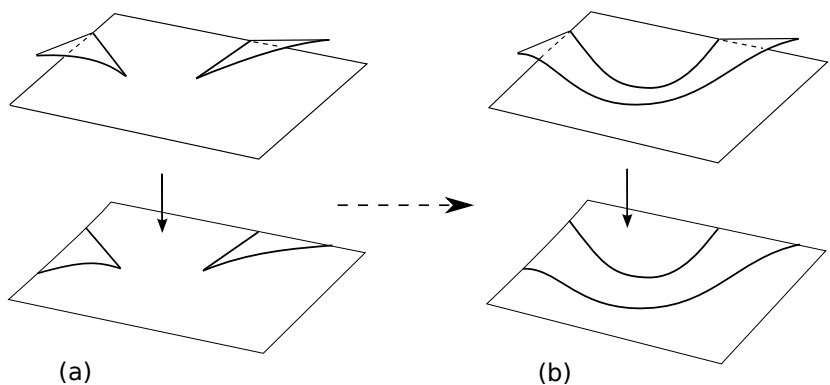, height=5cm} 
\end{center} 
\caption{{\bf Eliminating cusps.}}  
\label{elimcusp}  
\end{figure}

Recall that if $f \co M \to \R^2$ is a stable map and
$S_f \subset M$ denotes its singular set, then $f|_{S_f}$
is a generic immersion with cusps, i.e.\ 
if $C_f \subset M$ denotes the set of cusp points, then
$f|_{S_f  - C_f}$ is a generic immersion with finitely many double points and
$f|_{C_f}$ is disjoint from $f|_{S_f  - C_f}$.

The following result will be the key ingredient in our subsequent arguments for
proving Theorem~\ref{fothm}.
\begin{lem}[Making wrinkles]\label{wrink}
Suppose that $f \co M \to \R^2$ is a stable map
 and
let $L\subset M$ denote 
an embedded  closed $1$-dimensional manifold 
 such that $L$ is disjoint from
the singular set $S_f$,  $f|_{L}$ is a generic immersion 
and $f|_{L \cup S_f}$ is a generic immersion with cusps.
Let $N_{L}$ be a small tubular neighborhood of $L$ disjoint from $S_f$ and
fix an identification of $N_L$
 with the normal bundle of $L$. 
Let $s \co L \to N_{L}$ be a non-zero section 
such that $f(s(x)) \neq f(x)$ for any $x \in L$.
Then $f$ is homotopic to a smooth stable map $f'$ such that
\begin{enumerate}[\rm (1)]
\item 
$f = f'$ outside $N_{L}$,
\item
the singular set of $f'$ is $S_f \cup L \cup s(L)$,
\item
$f'$ has indefinite fold singularities along  $L$,
\item
$f'$ has definite fold singularities along  $s(L)$,
\item
$f'|_{L} = f|_{L}$,
\item
$f'|_{s(L)}$ is an immersion parallel to $f|_{L}$ and
\item
if for a double point $y$ of $f|_{L}$ the two points in 
$f^{-1}(y) \cap L$ lie in the same connected component of the fiber $f^{-1}(y)$, then 
 the double point $y$ of $f'|_L$ correspond to a
 singularity of type {\sc  (d)}.
\end{enumerate}
\end{lem}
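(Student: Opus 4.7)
The plan is to reduce to a local model via a standard fold-pair birth and then verify the listed conditions directly. The crucial input is that $L \cap S_f = \emptyset$, so $f$ is a submersion on $N_L$ and we have complete freedom to deform it inside $N_L$ as long as we leave it unchanged near $\partial N_L$.

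First I would choose adapted coordinates. Trivializing the normal bundle, write $N_L \cong L \x D^2$ with $L$ identified with $L \x \{0\}$. Because $f|_{N_L}$ is a submersion and $f(s(x)) \neq f(x)$, the section $s$ is nowhere tangent to the fibers of $f|_{N_L}$; after a fiber-preserving diffeomorphism of the $D^2$-factor we may assume that near each point of $L$ there are local coordinates $(t,x,y)$ on $N_L$ and $(u,v)$ on $\R^2$ such that $f(t,x,y) = (t,x)$, the core $L$ corresponds to $\{x=y=0\}$, and $s(L)$ corresponds to $\{x=x_0,\,y=0\}$ for some fixed $x_0>0$.

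Next I would perform the birth. Pick a smooth $g \co D^2 \to \R$ that coincides with $(x,y) \mapsto x$ near $\partial D^2$ and has exactly two non-degenerate critical points: a saddle at $(0,0)$ with value $0$, and a local minimum at $(x_0,0)$. Such a $g$ exists by a standard fold-birth construction (going through a single cubic cusp in a one-parameter family, cf.\ \cite{Lev0}). Define $f'$ to agree with $f$ outside $N_L$ and to be $(t,x,y) \mapsto (t, g(x,y))$ on $N_L$. The singular set of $f'$ in $N_L$ is then exactly the critical locus of $g$, which is $L \sqcup s(L)$; the saddle yields indefinite fold singularities along $L$, while the minimum yields definite fold singularities along $s(L)$. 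Items (1)--(6) now follow directly from the model: (1) by the cutoff; (2)--(4) from the Morse indices of the critical points of $g$; (5) because the saddle value $g(0,0)=0$ agrees with the $x$-coordinate of $L$; (6) because $s(L)$ appears in the image as the horizontal line $v=g(x_0,0)$, which is a constant translate of $f|_L=\{v=0\}$ in the local chart.

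The main obstacle is (7). At a double point $y \in \R^2$ of $f|_L$, two branches of $L$ cross transversely, and the construction above produces two transverse indefinite fold arcs of $f'$ meeting over $y$. The local Stein factorization near such a crossing of two indefinite folds is either of type {\sc (b)} or of type {\sc (d)} depending on whether the two preimages of $y$ in the singular set lie in distinct components or in a single component of $(f')^{-1}(y)$, respectively (this is exactly the dichotomy recorded in Figure~\ref{st2}). Since the deformation is supported in two disjoint small disk neighborhoods of the two preimage points of $y$ in $L$, the connectedness of $(f')^{-1}(y)$ through the complement of these disks is the same as the connectedness of $f^{-1}(y)$ through $M \setminus N_L$. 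Hence the two new indefinite fold points of $f'$ over $y$ lie in the same component of $(f')^{-1}(y)$ precisely when the two original points of $f^{-1}(y) \cap L$ lay in the same component of $f^{-1}(y)$, and in that case a direct inspection of nearby fibers in the local model identifies the crossing as type {\sc (d)}.
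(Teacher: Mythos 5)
Your proof is correct and takes essentially the same route as the paper's: the authors simply say "we perform the homotopy inside $N_L$ fiberwise as shown by Figure~\ref{homot}; since $N_L$ is a trivial bundle, the fiberwise homotopy assembles to a homotopy of $N_L$" and rely on the figure, whereas you make the fiberwise birth explicit via the Morse function $g$ on $D^2$ with a saddle--minimum pair and the suspension $f'(t,x,y)=(t,g(x,y))$. Two small remarks: first, the reduction to the model $f(t,x,y)=(t,x)$ requires choosing the tubular neighborhood (not merely a fiber-preserving reparametrization of a given $D^2$-factor) so that each normal disk contains $\ker df$ along $L$; this is possible because $\ker df$ is a line field transverse to $L$, but it is slightly more than a reparametrization. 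Second, in your discussion of item (7) the alternative to type {\sc (d)} is a \emph{simple singularity crossing} (two type-{\sc (b)} arcs whose images cross in $\R^2$ but whose singular points lie in different fiber components), rather than literally the local form {\sc (b)}; and to conclude {\sc (d)} (as opposed to {\sc (e)}) one does need, as you say, the direct inspection of the model level sets of $g$ near the saddle. These are terminological/expository points rather than gaps, and your treatment of (7) is in fact more explicit than the paper's, which leaves that verification to the figure.
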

\begin{proof}
We perform the homotopy inside $N_{L}$ fiberwise as shown by Figure~\ref{homot}.
Since $N_L$ is the trivial bundle, the homotopy of the fibers yields a homotopy of the 
entire $N_L$.
\end{proof}

\begin{figure}[ht] 
\begin{center} 
\epsfig{file=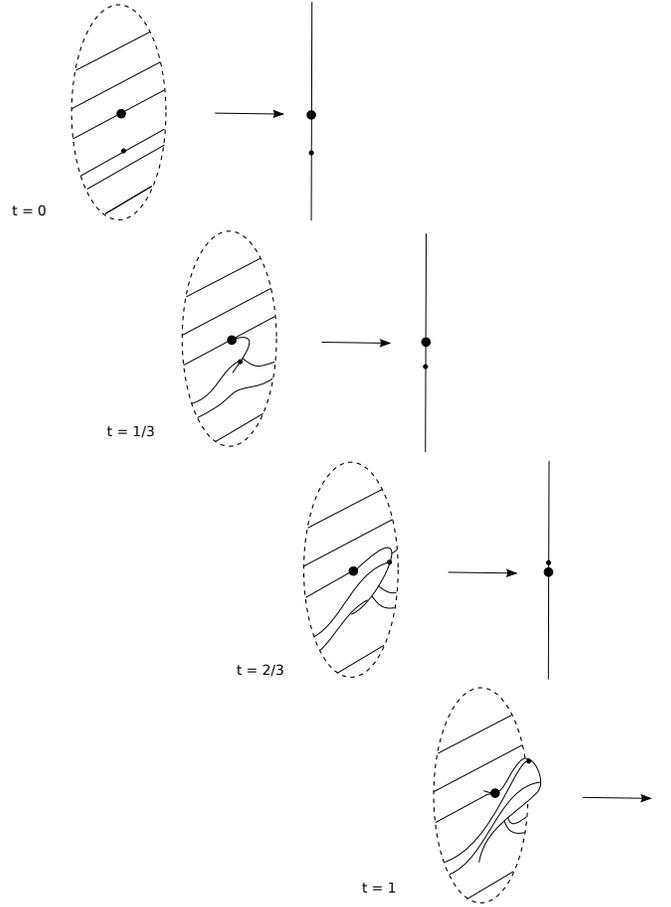, height=12cm} 
\end{center} 
\caption{{\bf The deformation of $f$ to $f'$ in a fiber of $N_{L}$.}}  
\label{homot}  
\end{figure}

\begin{rem}\label{cuspwrink}
If the submanifold $L$ has boundary, we can still get something similar.
In this case the section $s$ should be  zero at the boundary points of $L$, and
the homotopy yields a stable map $f'$ with cusps at $\del L$.
\end{rem}

\section{Proof of the results}

\subsection{Construction of the stable map on $M$}

\begin{proof}[Proof of Theorem~\ref{fothm}]
We will prove the theorem by presenting an algorithm which produces
the map $F$ on $M$ with the desired properties. This algorithm will be
given in seven steps; the first six of these steps are concerned with
maps on $S^3$. Let us start with a fold map $f_0 \co S^3 \to \R^2$
with one unknotted circle $C \subset S^3$ as singular set such that
$f_0\vert _C$ is an embedding and $f_0^{-1}(p)$ is a circle for each
regular point $p \in f_0(S^3)$.  Then the Stein factorization of $f_0$
is a disk together with its embedding into $\R^2$.  By cutting out the
interior of a sufficiently small tubular neighborhood $N_C$ of $C$
from $S^3$, we get a solid torus $S^3 - {\rm {int}}\,N_C$ whose
boundary is mapped into $\R^2$ by $f_0$ as a circle fibration over a
circle parallel to $f_0(C)$, and $f_{0}|_{S^3 -  {\rm {int}}\,N_C}$ is a trivial
circle bundle $S^1 \x D^2 \to D^2$.  Suppose the link $ L \subset S^3$
is disjoint from $N_C \cup \{1\} \x D^2$. Then by identifying $S^3 -
(N_C \cup \{1\} \x D^2)$ with $\R^3$ and $f_{0}|_{S^3 - (N_C \cup
  \{1\} \x D^2)}$ with the projection onto $\R^2$, we get a link
diagram $\overline L = f_0( L)$. Now we start modifying this map $f_0$. In
Steps 1 through 6 we will deal with maps on $S^3$, and the goal will
be to obtain a map which is suitable with respect to the fixed surgery
link $L$.  In particular, we aim to find a map on $S^3$ with the
property that its restriction to any component of $L$ is an embedding
into $\R ^2$.  We suppose that the modifications through Step 1,
\ldots, Step 6 happen so that all the images of the maps $f_1$,
\ldots, $f_6$ lie completely inside the disk determined by the
(unchanged) circle $f_i(C)$, $i = 1, \ldots, 6$. This can be reached
easily by choosing $f_0(C)$ to bound an area ``large enough'' in
$\R^2$ and supposing that the diameter of $\overline L$ is small.

\subsection*{Step 1}
Our first goal is to deform $f_0$ so that the resulting map $f_1$ has
 fold singularities along $L$.
Apply Lemma~\ref{wrink} to the map $f_0 \co S^3 \to \R^2$ 
and the embedded $1$-dimensional manifold $ L \subset S^3$, and 
denote the resulting stable map by $f_1$. It is a fold map,
its indefinite fold singular set is $ L$ and its definite fold singular set is 
$C \cup  L'$, where $ L' = s(L)$ is isotopic to $ L$; for an example see 
Figure~\ref{elsowrinkled}.

\begin{figure}[ht] 
\begin{center} 
\epsfig{file=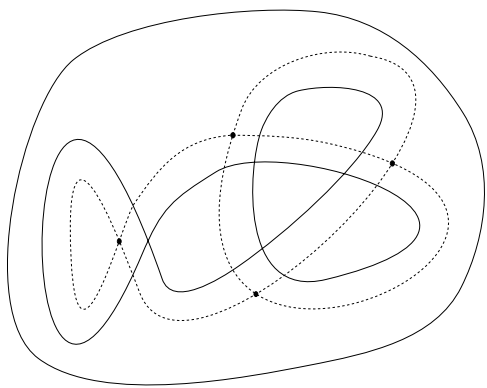, height=5cm} 
\end{center} 
\caption{{\bf The image of the singular set of the map $f_1 \co S^3 \to \R^2$, 
where $ L$ is the trefoil knot.}
The outer circle is $f_1(C)$, the inner solid curve is $f_1( L')$
 and the dashed curve is $f_1( L)$.
The double points of $f_1( L)$ correspond to singularities of type {\sc  (d)}.}  
\label{elsowrinkled}  
\end{figure} 

Since $ L'$ is isotopic to $ L$, 
the integral surgery along $L$ giving $M$ can be equally
performed along $L'$.
Recall that doing surgery along $L'$ simply means 
that we cut out a tubular neighborhood of the definite fold curve $ L'$
(which is diffeomorphic to $ L' \x D^2$),
and glue it back by a diffeomorphism of its boundary $ L' \x S^1$.
If the image $f_1( L')$ was an embedding of circles, then it would be 
easy to construct the claimed map $F$ on the $3$-manifold given by the integral surgery.
Since this is not the case in general, we need to further deform the map $f_1$.

Let 
$B$ denote the interior of the bands (one for each component of $L$)
bounded by $q_{f_1}(L)$ and $q_{f_1}(L')$ in the Stein factorization $W_{f_1}$.
Then $B$ is immersed into $\R^2$ by $\bar f_1$.
The Stein factorizations of the maps $f_2, \ldots, f_6$ in the next steps
will be built on $B$.
Let $B'$ denote the surface $W_{f_1} - {\rm cl}B$.

\subsection*{Step 2}
Now, our goal is to deform $f_1$ so that the Stein factorization of
the resulting map $f_2$ has small ``flappers'' near $q_{f_2}(L')$ at
the points where $\bar f_2( q_{f_2}(L'))$ is tangent to the general
position vector $v$. These ``flappers'' will help us to move the image
of $L$ so that it will become an embedding into $\R ^2$.

First, we use Lemma~\ref{wrink} together with Remark~\ref{cuspwrink}
as follows.  Let $T$ be the set of points in $q_{f_1}(L')$ such that
for each $p \in T$ the direction $v$ is tangent to $f_1(L')$ at $\bar
f_1(p)$.  For each $p \in T$ take a small embedded arc $\al_p$ in a
small neighborhood of $p$ in $B$ such that $\bar f_1\vert _{\al_p}$ is an
embedding parallel to $f_1(L)$.  For each arc $\al_p$ there exists an
embedded arc $\tilde \al_p$ in $S^3$ such that $q_{f_1}|_{\tilde
  \al_p}$ is an embedding onto $\al_p$.  See, for example, the upper
picture of Figure~\ref{kinovesek}, where the small dashed arcs having
cusp endpoints represent the arcs $f_1(\tilde \al_p) =\bar f_1 (\al
_p)$ for all $p \in T$.

\begin{figure}[ht] 
\begin{center} 
\epsfig{file=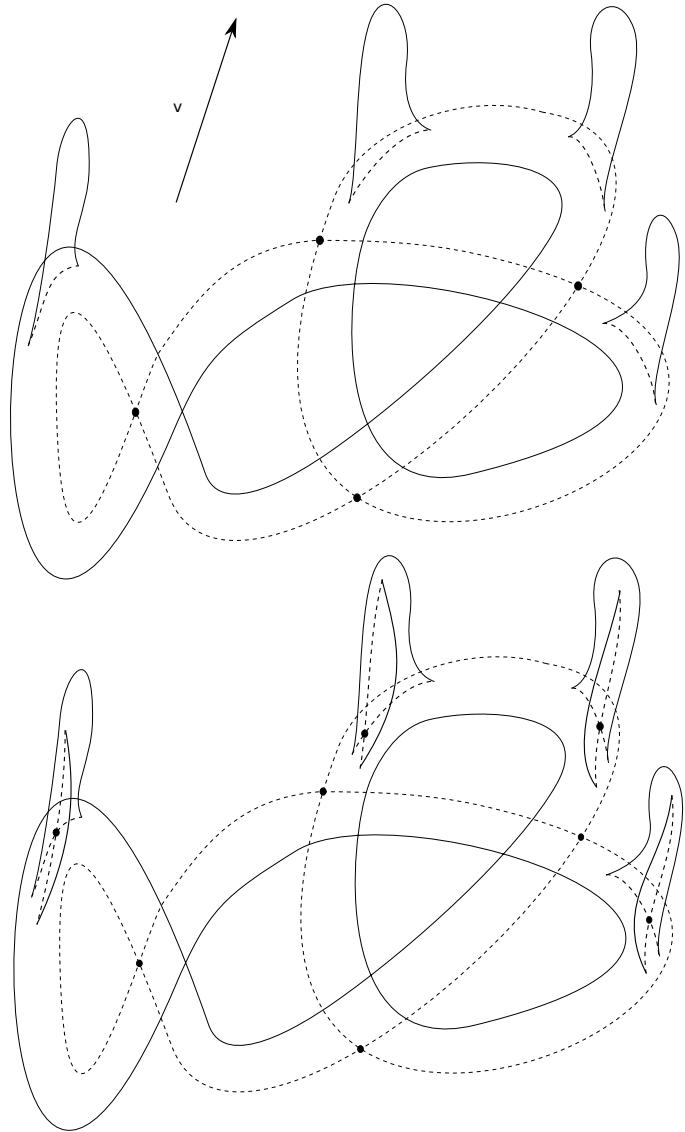, height=15cm} 
\end{center} 
\caption{We obtain the upper picture by applying Lemma~\ref{wrink} and
  Remark~\ref{cuspwrink} to the small arcs $\{ \tilde \al_p : p \in T
  \}$ in $S^3$ which are mapped by $f_1$ to the dashed arcs near the
  points of the diagram $\overline L$ where it is tangent to $v$.  We
  obtain the lower picture by applying Lemma~\ref{wrink} and
  Remark~\ref{cuspwrink} to the new arcs added to the upper
  picture. The solid arcs correspond to singularities of type {\sc
    (a)} and the black double points of the dashed arcs correspond to
  singularities of type {\sc (d)}.}
\label{kinovesek}  
\end{figure}

Apply Lemma~\ref{wrink} and Remark~\ref{cuspwrink}
to the map $f_1\colon S^3\to \R ^2$ and the arcs $\{ \tilde \al_p \subset S^3: p \in T \}$
to obtain a map $f_1'$. 
The section $s$ in Lemma~\ref{wrink} is chosen 
so that if we project
the $f_1'$-images of the arising new definite fold curves in $\R^2$  to $\R v$, then
for each curve there is  only one critical point, which is a maximum.
An example for the resulting map $f_1'$
  can be seen in  the upper  picture of 
Figure~\ref{kinovesek}. 
Note that the deformation yielded small ``flappers'' in $W_{f_1'}$ attached to $B$
along the arcs $\{  \al_p : p \in T \}$. 
Next, for each $p \in T$ take small arcs $\be_p$ in $W_{f_1'}$
which intersect generically the previous arcs $\{  \al_p : p \in T \}$, 
lie in $B$ and on the ``flappers''
and are mapped into $\R^2$ almost parallel to $v$. See  the new small dashed arcs in the lower picture of 
Figure~\ref{kinovesek}.
Once again, there are small arcs $\{ \tilde \be_p : p \in T \}$ embedded in $S^3$ 
mapped by $f_1'$ onto $\{ \be_p : p \in T \}$, respectively. 

The application of 
Lemma~\ref{wrink} and Remark~\ref{cuspwrink}
for these arcs provides us a map, which we denote by $f_2$.
This map will have one additional flapper for every flapper of $f_1'$.
We choose the section $s$ in Lemma~\ref{wrink} so that
the $f_2$-images of the arising new definite fold curves
 lie inward\footnote{At a point of $\{ \bar f_1'(p) : p \in T \}$ let us call the direction 
which is perpendicular to $f_1'(L')$ and points toward 
the direction where locally $f_1'(L')$ lies
``inward''.}
 from the arcs $\{ \bar f_1'(\be_p) : p \in T \}$, respectively,  in the $\bar f_2$-image
  of $B$ and  the previous flappers.
For an enlightening example, see
the  lower picture of 
Figure~\ref{kinovesek}.  
Note that after this step $|T|$ new singular points of type {\sc  (d)} appeared.
Also note that for each $p \in T$ we have four cusp singular points in $S^3$, three of which are mapped by $q_{f_2}$ into $B$. We denote the set of these three cusps  by $C_p$.
For each $p \in T$ the $f_2$-images of two of these three cusps in $C_p$ point to the direction $-v$.
We denote the set of these two cusps  by $D_p$.
Note that the definite fold curves in the images of the two cusps in $D_p$ 
are on opposite sides.

\subsection*{Step 3}

Now our goal is to obtain definite fold arcs connecting points of $S^3$ 
where $f_2$ had cusps.
Moreover these definite fold arcs will be mapped into $\R^2$ parallel to the diagram
$\overline L$. (These curves will be translated in the next step so that later they will lead to an embedding
of $L$ into $\R ^2$.)

In order to reach this goal, we deform the map $f_2 \co S^3 \to \R^2$ further by  eliminating half of the cusps 
as follows.
We proceed for each component of $L$ separately and in the same way, thus in the following
we can suppose that $L$ is connected.
Take a cusp $q_0 \in S^3$ which is in $C_x-D_x$ for an $x \in T$
such that the entire $f_2(L')$ lies to the right hand side of its tangent at $\bar f_2(x)$.
By going along the band $B$ in $W_{f_2}$ in the direction to which  
the $f_2$-image of this cusp $q_0$ points, 
we reach another cusp $q_1$ in $C_p$ for some $p \in T$
at the next $v$-tangency of $f_2(L')$.
If this cusp does not belong to $D_p$, then
it is possible to apply Lemma~\ref{cuspelimlem} and eliminate these two cusps,
since they are in the position of Figure~\ref{elimcusp}.
Then we continue by taking the cusp in $D_p$
whose Stein factorization is folded inward. 
If the cusp $q_1$ does belong to $D_p$, then we choose 
that cusp from $D_p$ which can be used to eliminate $q_0$
(it is easy to see that 
this is exactly the cusp in $D_p$
whose Stein factorization is folded inward), we eliminate them, then
we continue by taking the cusp belonging to $C_p-D_p$.
This procedure goes all along the band $B$, meets all $p \in T$
and eliminates half of the cusps.
After finishing this process, we obtain a stable 
map, which we  denote by $f_3$; cf. Figure~\ref{kinovesekcuspelim} 
for an example.  
\begin{figure}[ht] 
\begin{center} 
\epsfig{file=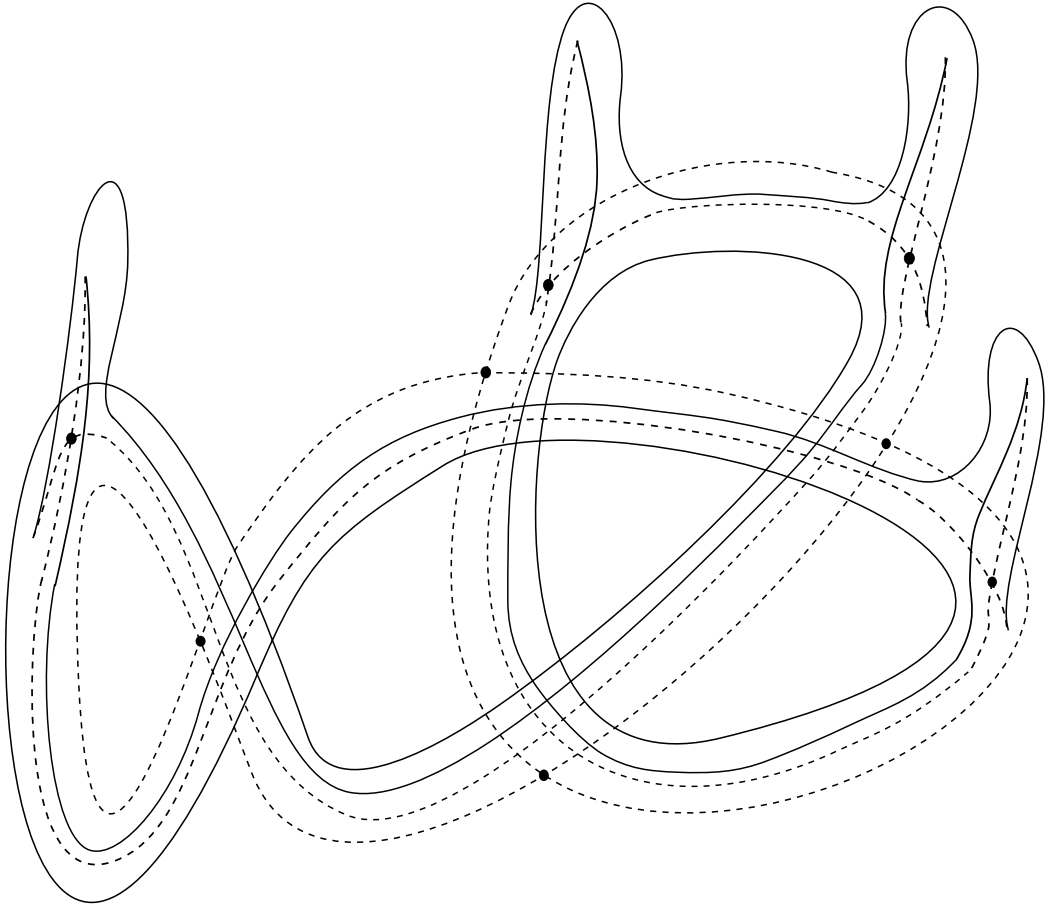, height=12cm} 
\end{center} 
\caption{{\bf Eliminating half of the cusps in the lower picture of Figure~\ref{kinovesek}.} The black double points correspond to singularities of type {\sc (d)}.}  
\label{kinovesekcuspelim}  
\end{figure} 
The cusp elimination results new definite fold curves whose 
$f_3$-image is an immersion, and which have double points
near the crossings of the diagram $\overline L$.
In the next step we will deform $f_3$ so that
the double points of these new curves will be localized near the images of the remaining cusps.

\subsection*{Step 4}
Now our goal is to deform $f_3$ to a map $f_4$ such that the definite
fold arcs obtained in the previous step will be mapped into $\R^2$ far
from the diagram $\overline L$.  (Informally, we will ``lift'' some of the
arcs in the direction of $v$.)  Moreover, the immersion of these
definite fold arcs into $\R^2$ will have double points only near some
cusps of $f_4$. This brings us closer to the original goal to have a
map which embeds a link isotopic to $L$ into the plane.

The cusp eliminations above affect only small tubular neighborhoods of
curves connecting cusps in $S^3$. Denote by $\de \subset S^3$ the new definite
fold arcs which appear in these tubular neighborhoods after the eliminations.
Note that by the algorithm above, the arcs $\de$ are mapped into $\R^2$ so that by an elementary deformation
they can be moved ``upward'' in the direction of $v$, see Figure~\ref{kinovesekcuspelim}.

So we further deform $f_3 \co S^3 \to \R^2$ to get a stable map
denoted by $f_4$ as indicated in
Figure~\ref{kinovesekcuspelimfelnovel}: as it is shown by the picture,
the arcs are ``lifted''.  In fact, we deform $\bar f_3$: we move the
top of the ``flappers'' corresponding to the $\al$-curves of Step 2
and the $\bar f_3$-image of the curves $q_{f_3}(\de)$ in the direction
of $v$ and far from $f_3(L)$.  We proceed for each component of $L$
separately and in the same way, thus in the following we can suppose
that $L$ is connected.  First we choose a point $x \in T$ such that
the entire $f_3(L')$ lies to the right hand side from its tangent at
$\bar f_3(x)$.  Then, by walking along the band $B \subset W_{f_3}$
starting from $x$, we deform the flappers and the curves $\bar f_3(
q_{f_3}(\de))$ to be mapped into the plane as a ``zigzag'' far away
from the diagram $\overline L$.  More precisely, consider the coordinate
system in $\R^2$ with origin $x$ and coordinate axes $\R v^{\perp}$
and $\R v$, respectively, where $v^{\perp}$ denotes the vector
obtained by rotating $v$ clockwise by $90$ degrees.  By extending the
$\bar f_3$-image of the flappers in the direction of $v$ deform the
$\bar f_3$-image of the curves $q_{f_3}(\de)$ so that by going along
$B$ between the points $p_i, p_{i+1} \in T$, where $1 \leq i \leq
|T|-1$ and $p_1 = x$, the corresponding component of the curve
$f_3(\de)$ is mapped into a small tubular neighborhood of a line with
slope $(-1)^{i+1}$ for $i = 1, \ldots, |T|-1$. Finally, arrange the
last component of ${f_3}(\de)$ starting with slope $-1$ and ending at
the first (extended) flapper belonging to $x$, see
Figure~\ref{kinovesekcuspelimfelnovel}.

Note that as a result the double points of the immersion of the deformed curves 
$f_4(\de)$ are in a small neighborhood of
the cusps mapped close to the tops of the flappers.
\begin{figure}[ht] 
\begin{center} 
\epsfig{file=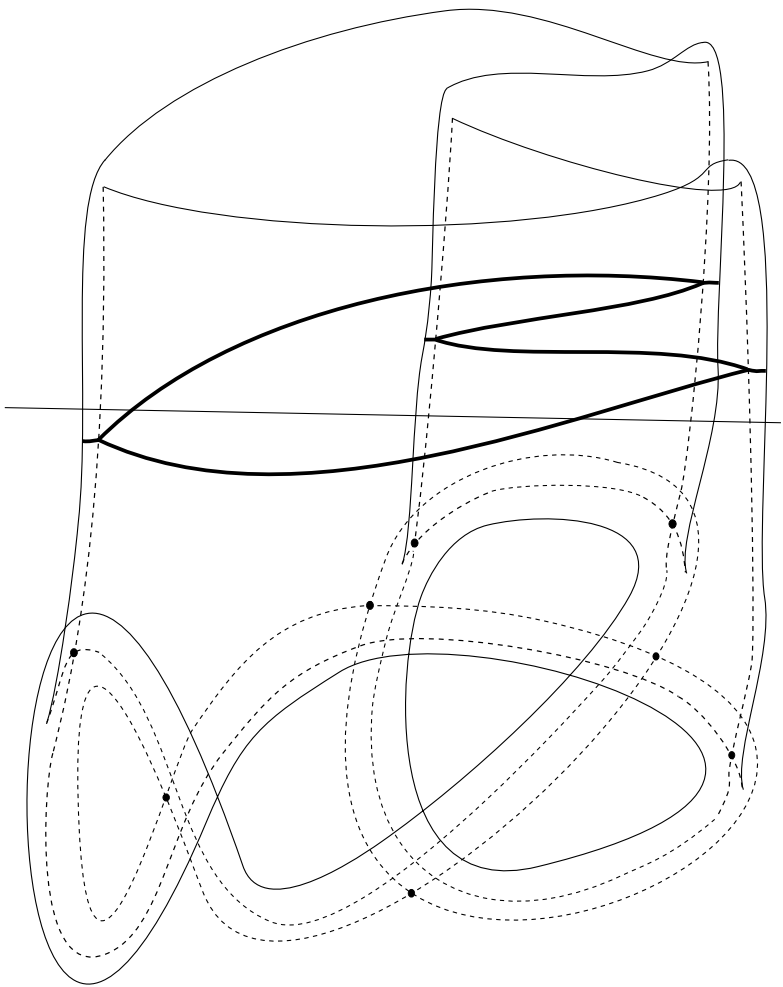, height=13cm} 
\end{center} 
\caption{{\bf The Stein factorization of $f_4$, i.e.\ the deformation of  $f_3$ of Figure~\ref{kinovesekcuspelim}.} 
(The straight  line represents the line $l$ used to cut $W_{f_4}$ in Step 5.)
The upper part of $W_{f_4}$ from the bold $1$-complex is denoted by $A$. 
(As usual, the circle $f_4(C)$ is omitted.)}  
\label{kinovesekcuspelimfelnovel}  
\end{figure} 

\subsection*{Step 5}
In this step, we modify the stable map 
$f_4$ so that the cusps of the resulting map $f_5$ will be easy to eliminate in the next step.
Let $l \subset \R^2$ be a line perpendicular to $v$
located near   $\bar f_4(B)$, separating it from the other parts moved to the direction of $v$ in 
Step~4, as indicated  in Figure~\ref{kinovesekcuspelimfelnovel}.

Now, we cut the $2$-complex  $W_{f_4} - B'$ (recall that $B'$ denotes $W_{f_1} - {\rm cl}B$, see Step 1) along
the $\bar f_4$-preimage of the line $l$, thus
we obtain the decomposition
$$W_{f_4} = A \cup_{\bar f_4^{-1}(l) \cap (W_{f_4} - B')} A',$$
where $A'$ denotes the $2$-dimensional CW complex 
containing $q_{f_4}(L)$ and $A$ denotes the closure of $W_{f_4} - A'$.
Then $q_{f_4}^{-1}(A)$ is a $3$-manifold with boundary. Let us denote 
 the $1$-complex $q_{f_4}(\del q_{f_4}^{-1}(A))$ by $\del A$.
In order to visualize $\del A$
in Figure~\ref{kinovesekcuspelimfelnovel},
we suppose that the cutting of $W_{f_4}$ 
along $\bar f_4^{-1}(l) \cap (W_{f_4} - B')$ is a little bit perturbed and thus
the bold $1$-complex in Figure~\ref{kinovesekcuspelimfelnovel} represents $\del A$. Before proceeding further, we need a better understanding of the 
$q_{f_4}$-preimages of the sets appearing in the above decomposition.
The preimage $q_{f_4}^{-1}(\del A)$ is clearly diffeomorphic to
$J \x S^1$ for a link $J \subset S^3$.
The following statements show much more about $q_{f_4}^{-1}(\del A)$.
 It is  easy to see that
the numbers of components of $J$ and $L$ are equal. However, we have the stronger

\begin{lem}\label{boldlong}
A longitudinal curve in $q_{f_4}^{-1}(\del A)$ is isotopic to $L$. 
\end{lem}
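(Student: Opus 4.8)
The plan is to trace the link $L$ through the sequence of deformations $f_1,\dots,f_4$ and identify it directly inside $q_{f_4}^{-1}(\del A)$. The key point is that none of the moves in Steps 2--4 (which are all applications of Lemma~\ref{wrink}, Remark~\ref{cuspwrink}, Lemma~\ref{cuspelimlem}, and elementary deformations of $\bar f_i$) touches a neighborhood of the indefinite fold curve $L$ itself: by construction $L$ is the indefinite fold singular set created in Step~1 and it persists unchanged as part of the singular set of every $f_i$. So $L$ is literally a submanifold of $q_{f_4}^{-1}(W_{f_4})=S^3$, lying in $q_{f_4}^{-1}(B)$. The issue is that $q_{f_4}(L)$ sits in the interior of $B\subset A$, not on $\del A$, so I must push it to the boundary.

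First I would observe that $q_{f_4}^{-1}(A)$ deformation retracts onto $q_{f_4}^{-1}$ of a neighborhood of $\del A$ together with a neighborhood of $q_{f_4}(L)$; more usefully, since $\del A$ was obtained by perturbing the cut along $\bar f_4^{-1}(l)\cap(W_{f_4}-B')$ and $B$ is a band with core essentially $q_{f_4}(L')$ (parallel to $q_{f_4}(L)$), the surface $A$ contains $B$ as a sub-band whose two long boundary edges are $q_{f_4}(L)$ and a curve running along $\del A$. Concretely: in $W_{f_1}$ the band $B$ is bounded by $q_{f_1}(L)$ and $q_{f_1}(L')$, and the subsequent steps only attach flappers and reshape the $L'$-side and the $\de$-arcs; the $L$-side of $B$ survives. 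Thus inside $A$ there is an embedded annulus (or union of annuli) $R$ whose one boundary component is $q_{f_4}(L)$ and whose other boundary component is a curve $\gamma\subset\del A$. Pulling this back, $q_{f_4}^{-1}(R)$ contains an embedded $S^1$-worth of such annuli exhibiting an isotopy in $S^3$ from $L$ to a curve in $q_{f_4}^{-1}(\gamma)\subset q_{f_4}^{-1}(\del A)$.

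Next I would check that $\gamma$ is a longitudinal curve of $q_{f_4}^{-1}(\del A)\cong J\times S^1$, i.e.\ that the isotoped copy of $L$ is a section of this $S^1$-bundle rather than something that wraps. This is because the annulus $R$ meets each $q_{f_4}$-fiber of $q_{f_4}^{-1}(\del A)$ in a single arc (the band $B$ is an honest band, mapped by an immersion, with the fiber direction transverse to its core), so the curve $\gamma=q_{f_4}(L)$ pushed across $R$ hits each $S^1$-fiber once. Finally, the count already noted in the text — that $J$ and $L$ have the same number of components — together with the explicit annulus $R$ identifies a longitudinal curve of $q_{f_4}^{-1}(\del A)$ with (an isotopic copy of) $L$ componentwise.

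The main obstacle I expect is the bookkeeping in the middle step: verifying carefully that the band $B$ survives all of Steps~2--4 with its $L$-edge intact and that, after the perturbed cut along $l$ in Step~5, the remaining edge of (the relevant part of) $B$ really lies on $\del A$ and is longitudinal — the flapper attachments and the zigzag rearrangement of the $\de$-arcs in Step~4 complicate the picture of $A$, and one has to be sure the annulus $R$ can be taken embedded and fiber-transverse despite these. Everything else (the isotopy extension, the component count, the "hits each fiber once" claim) is routine once the annulus is in hand.
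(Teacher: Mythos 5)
Your basic strategy --- exhibit an annulus in $W_{f_4}$ from (something near) $L$ out to $\del A$, lift it to $S^3$, and read off the isotopy --- is the same strategy the paper follows. But the proposal contains both a bookkeeping error and, more seriously, it treats as automatic the step that is actually the entire content of the lemma.

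\textbf{Bookkeeping.} You place $q_{f_4}(L)$ in the interior of $B\subset A$. Neither containment is right. By construction $q_{f_1}(L)$ is a boundary edge of the open band $B$, not an interior curve, and in Step~5 the paper explicitly takes $A'$ to be the piece containing $q_{f_4}(L)$ and $A$ the closure of the complement; the line $l$ is chosen to separate $\bar f_4(B)$ from the flappers and the lifted $\de$-arcs, so $B$ lies in $A'$. Hence ``inside $A$ there is an embedded annulus $R$ whose one boundary component is $q_{f_4}(L)$'' cannot be literally true; the annulus must run through $A'$ as well. A smaller but related issue: $q_{f_4}|_L$ is not injective --- two points of $L$ over a crossing of $\overline L$ lie on the same fiber component and get identified, producing a type {\sc (d)} point --- so $q_{f_4}(L)$ is an immersed curve with double points and cannot be the boundary of an embedded annulus as stated. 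The paper sidesteps both problems by working from $L'$ instead of $L$: it takes $\la=\del N(q_{f_4}(L'))$, where $L'$ is definite fold so that $q_{f_4}^{-1}(N)$ is a genuine $D^2$-bundle and $q_{f_4}^{-1}(\la)\cong L'\times S^1$, and then isotopes $\la$ through $\iota$ to the subcomplex $\si\subset\del A$.

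\textbf{The real gap.} You write ``Pulling this back, $q_{f_4}^{-1}(R)$ contains an embedded $S^1$-worth of such annuli exhibiting an isotopy in $S^3$,'' and later call the isotopy-lifting ``routine once the annulus is in hand.'' It is not. The annulus $\ep(S^1\times[0,1])$ (in the paper's notation) must cross the images of the type {\sc (d)} singularities sitting in $B$, and over a neighborhood $U_j$ of such a point the map $q_{f_4}$ is far from a bundle projection: $q_{f_4}^{-1}(U_j)\cong R\times[0,1]$ with $R$ a disk with three holes (Figure~\ref{nonsimpleanal}). Whether the annulus lifts across $U_j$, and with which boundary behaviour, requires choosing the disks $\tilde D_j\subset q_{f_4}^{-1}(U_j)$ whose boundaries project to $\del D_j$, checking the four possible configurations of $D_j\subset U_j$, and then patching these local lifts together with the lifts over the type {\sc (b)} and {\sc (c)} strata and the regular part. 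That case analysis is the bulk of the paper's proof, and your proposal does not acknowledge it --- you instead flag the description of $\del A$ as the ``main obstacle,'' which is comparatively minor. Without the $\tilde D_j$ argument, the claim that the annulus lifts is unsubstantiated, and the proof has a genuine hole.
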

\begin{proof}
The $1$-complex $\del A$ decomposes as a union of $1$-cells: some of
them (which we depict as ``small $1$-cells'' in
Figure~\ref{kinovesekcuspelimfelnovel}) are attached at one of their
endpoints to the union of the other 1-cells, we denote these small
cells by $\si_i$ for $i = 1, \ldots, {|T|}$.  Others are attached by
both of their endpoints.  Let $\si$ denote the $1$-complex $\del A -
\bigcup_{i=1}^{|T|} \si_i$.  Then the PL embedding $\si \subset
W_{f_4}$ is isotopic to the subcomplex $\iota$ of $W_{f_4}$ formed by
the arcs of type {\sc (b)} in the open bands $B$ connecting the
singular points of type {\sc (d)} in $B$.  Furthermore, the subcomplex
$\iota$ is isotopic to $q_{f_4}(L')$.  Take a small closed regular
neighborhood $N$ of $q_{f_4}(L')$.  Then $q_{f_4}^{-1}(N)$ is
naturally a $D^2$-bundle over $L'$.  The boundary of $N$ in $W_{f_4}$
is a $1$-manifold isotopic to $q_{f_4}(L')$, and we will denote it by
$\la$.  Clearly $q_{f_4}^{-1}(\la)$ is diffeomorphic to $L' \x
S^1$. Note that any section of $q_{f_4}^{-1}(\la)$ is isotopic to
$L'$.

The isotopy between $\la$ 
and $\iota$ and the isotopy between 
$\iota$ and $\si$ can be chosen easily so that they
give a PL embedding $\ep \co S^1 \x [0,1] \to W_{f_4}$
such that $S^1 \x \{0\}$ and $S^1 \x \{1\}$
correspond to $\la$ and $\si$, respectively.
For $j = 1, \ldots, |T|$,
let $U_j$ denote small regular neighborhoods of 
the singular points of type {\sc  (d)}
located near the cusp points in $B$
in $W_{f_4}$,
such a $U_j$ and the restriction $\bar f_4|_{U_j}$ can be seen in Figure~\ref{st2}(d). Then 
the intersection $\ep(S^1 \x [0,1]) \cap (\bigcup_{j=1}^{|T|} U_{j})$
consists of a union of disks, which will be  denoted by $\bigcup_{j=1}^{|T|}  D_j$.

First, observe that for each $j = 1, \ldots, |T|$ there exists a disk
$\tilde D_{j}$ embedded into $q_{f_4}^{-1}(U_{j})$ in $S^3$ whose
boundary $\del \tilde D_{j}$ is mapped by $q_{f_4}$ homeomorphically
onto the boundary $ \del D_{j}$, i.e.\ $ \del \tilde D_{j}$ is a
lifting of $\del D_{j}$.  To see this, consider the $3$-manifold
$q_{f_4}^{-1}( U_{j})$ for each $j = 1, \ldots, |T|$.  By \cite{Lev}
the manifold $q_{f_4}^{-1}( U_{j})$ is diffeomorphic to $R \x [0,1]$,
where $R$ is a disk with three holes and it is mapped by ${f_4}$ into
$\R^2$ as we can see in Figure~\ref{nonsimpleanal}(a).  
\begin{figure}[ht]
\begin{center}
\epsfig{file=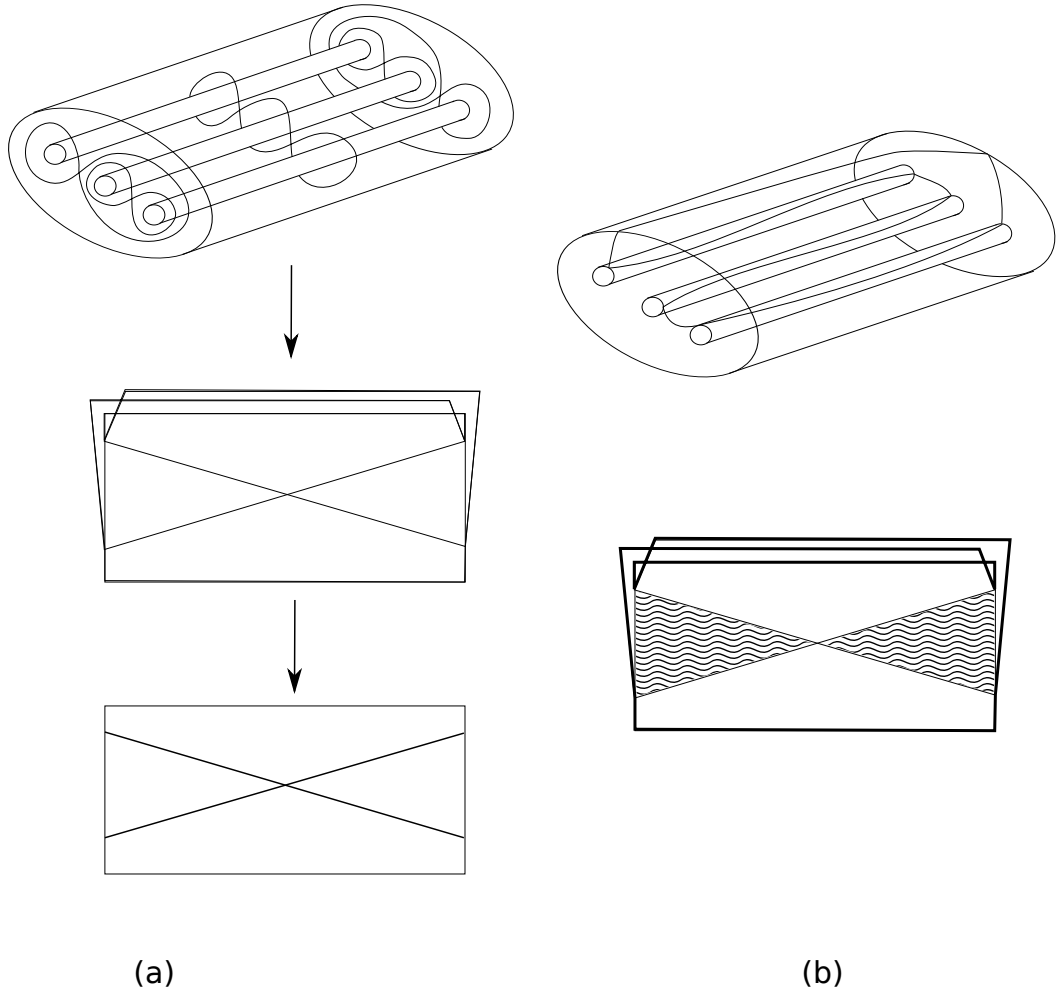, height=13cm}
\end{center}
\caption{In (a) we can see the manifold $R \x [0,1]$ and how it is
  mapped onto the regular neighborhood $U_j$ and into $\R^2$,
  cf.\ Figure~\ref{st2}(d).  $R \x \{0\}$ is mapped onto the left side
  of the rectangle $\bar f_4(U_j)$ as a proper Morse function with two
  indefinite critical points. The two ``figure eights'' in $R \x                    
  \{0\}$ are the two singular fibers.  $R \x \{1\}$ is mapped
  similarly onto the right side of $\bar f_4(U_j)$.  The middle fiber
  in $R \x [0,1]$ is mapped to the singular point of type {\sc (d)}.
  For a detailed analysis see \cite{Lev}. In (b) we can see the
  boundary $\del \tilde D_{j}$ in $R \x [0,1]$ and its image in $U_j$
  represented by a bold $1$-complex.}
\label{nonsimpleanal}
\end{figure}
Each disk $D_j$ can be located in $U_j$ essentially in four ways, for
example the lower picture of Figure~\ref{nonsimpleanal}(b) shows the
disk $D_j$ for the leftmost non-simple singularity crossing of type
{\sc (d)} in Figure~\ref{kinovesekcuspelimfelnovel}. We get $D_j$ on
the picture by cutting out the two shaded areas from the $2$-complex
$U_j$.  It is easy to see in the upper picture of
Figure~\ref{nonsimpleanal}(b) how to put the disk $\tilde D_j$ into $R
\x [0,1]$.  The other three possibilities for the location of a disk
$D_j$ in $U_j$ and the disk $\tilde D_j$ in $q_{f_4}^{-1}( U_{j})$ can
be described in a similar way.

Now observe that $\ep(S^1 \x [0,1]) - \bigcup_{j=1}^{|T|} D_{j}$ can
be lifted to $S^3$ extending $\bigcup_{j=1}^{|T|} \tilde D_{j}$
because of the following.  First, the regular neighborhoods of the
singular points of type {\sc (c)} in $B$ (see Figure~\ref{st2}(c))
intersect $\ep(S^1 \x [0,1])$ in disks which can be lifted to $S^3$.
Then the intersection of the small regular neighborhoods of the
singular curves of type {\sc (b)} and $\ep(S^1 \x [0,1])$ can be
lifted as well since there is no constraint for the lift at the
regular points of $f_4$.  Finally observe that the rest of $\ep(S^1 \x
[0,1])$ intersects $W_{f_4}$ only in areas of non-singular points
which are attached to the boundary of $\ep(S^1 \x [0,1])$, so the
previous lifts extend over the entire $\ep(S^1 \x [0,1])$.

Hence we obtain an embedding $\tilde \ep \co S^1 \x [0,1] \to S^3$
with $S^1 \x \{0\}$ and $S^1 \x \{1\}$ corresponding to lifts of $\la$
and $\si$, respectively. Thus we obtain an isotopy between a longitude
of $q_{f_4}^{-1}(\del A)$ and a lift of $\la$. The fact that any lift
of $\la$ is isotopic to $L'$ finishes the proof.
\end{proof}

\begin{figure}[ht] 
\begin{center} 
\epsfig{file=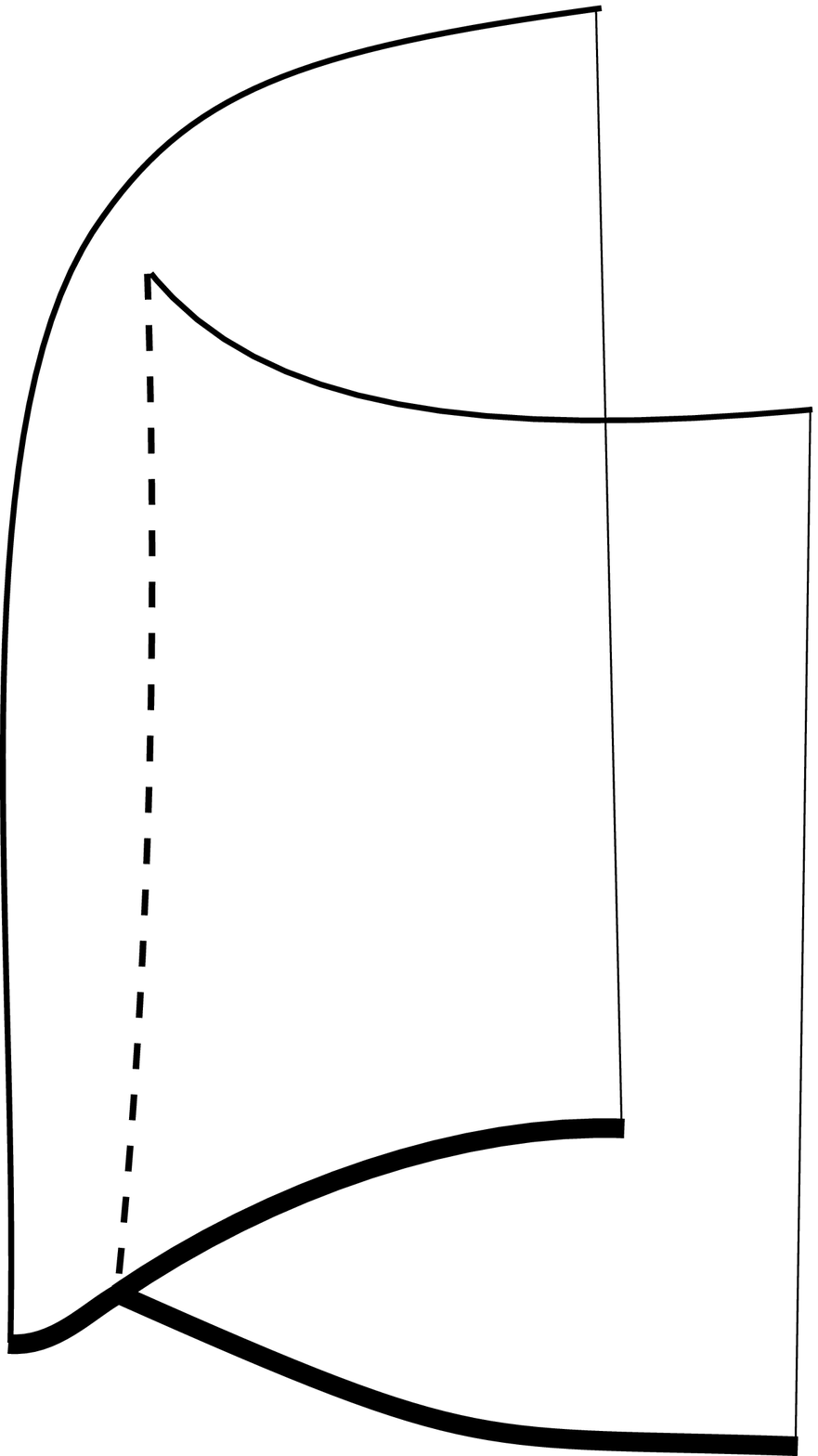, height=4cm} 
\end{center} 
\caption{}  
\label{sarkok}  
\end{figure}

\begin{lem}\label{boldlongneigh}
The preimage $q_{f_4}^{-1}(A)$ is isotopic to a regular neighborhood of $L$. 
\end{lem}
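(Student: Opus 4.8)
The plan is to leverage Lemma~\ref{boldlong} together with a dimension-one-up version of the disk-lifting argument that appeared in its proof. By Lemma~\ref{boldlong} we already know that a longitude of the embedded torus-bundle $q_{f_4}^{-1}(\del A) = J \x S^1$ is isotopic to $L'$, hence to $L$. What remains is to promote this statement about the boundary to a statement about the whole solid-torus-bundle neighborhood, i.e.\ to show that the $3$-manifold-with-boundary $q_{f_4}^{-1}(A)$ is a tubular neighborhood of a link isotopic to $L$, and not some more complicated $3$-manifold bounded by $J \x S^1$. First I would record that $q_{f_4}^{-1}(A)$ is connected along each component (the argument is performed component-by-component exactly as in Step~4 and in the proof of Lemma~\ref{boldlong}, so we may again assume $L$ connected) and that its boundary is a single torus.

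The key step is to produce, inside $q_{f_4}^{-1}(A)$, a Seifert-framed core circle whose tubular neighborhood exhausts $q_{f_4}^{-1}(A)$. I would build this by thickening the isotopy $\ep \co S^1 \x [0,1] \to W_{f_4}$ constructed in the proof of Lemma~\ref{boldlong}: recall $\ep$ interpolates between $\la = \del N$ (the boundary of a regular neighborhood $N$ of $q_{f_4}(L')$) and $\si = \del A - \bigcup \si_i$. The complex $A$ deformation retracts, rel the small $1$-cells $\si_i$, onto $N \cup (\text{the flappers and lifted zigzag arcs of Step~4})$; but each flapper and each lifted arc is, in $W_{f_4}$, a contractible piece attached to $N$ along an arc, so the whole of $A$ collapses onto $N$, and $N$ in turn collapses onto $q_{f_4}(L')$. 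Pulling this collapse back by $q_{f_4}$, using that over regular points the fibers are intervals and over the type {\sc (a)}, {\sc (b)}, {\sc (c)}, {\sc (d)} strata the local models of Figure~\ref{st2} are all collapsible onto their cores (this is exactly the local analysis of $q_{f_4}^{-1}(U_j) \cong R \x [0,1]$ used in Lemma~\ref{boldlong}, and similarly trivial for the simpler strata), one obtains a deformation retraction of $q_{f_4}^{-1}(A)$ onto $q_{f_4}^{-1}(N)$, which is a $D^2$-bundle over $L'$, i.e.\ a regular neighborhood of $L'$. Since a compact $3$-manifold with torus boundary that deformation retracts onto an embedded circle $L'$ and whose boundary longitude is isotopic to $L'$ is necessarily a tubular neighborhood of $L'$, and since $L'$ is isotopic to $L$ in $S^3$ (Step~1), this gives the claim.

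The main obstacle I expect is the same one that made Lemma~\ref{boldlong} delicate: verifying that the collapse of $A$ onto $q_{f_4}(L')$ lifts through $q_{f_4}$ over the non-simple singular points of type {\sc (d)} near the cusps — i.e.\ checking that each piece $q_{f_4}^{-1}(U_j) \cong R \x [0,1]$ collapses onto the appropriate sub-solid-torus compatibly with the disks $\tilde D_j$ already constructed. Here I would reuse Figure~\ref{nonsimpleanal} verbatim: the four possible positions of $D_j$ in $U_j$ each determine how $R \x [0,1]$ must be collapsed, and in every case the collapse is onto a neighborhood of the indefinite fold core, matching $\tilde D_j$ along its boundary, so the local collapses glue to the global one. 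Once this local bookkeeping is in place — together with the (routine) observation that the small $1$-cells $\si_i$ and the Step~4 flappers contribute only collapsible handles — the lemma follows. Everything else (connectedness, the torus boundary, the uniqueness of tubular neighborhoods of knots in $3$-manifolds) is standard.
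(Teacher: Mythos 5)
The high-level plan (show $q_{f_4}^{-1}(A)$ is a solid torus per component of $L$, then combine with Lemma~\ref{boldlong} and uniqueness of tubular neighborhoods to identify the core's knot type) is sound in spirit, and the closing observations about irreducibility, torus boundary, and $\pi_1 = \Z$ are fine. However, the step that is supposed to actually establish the solid-torus structure --- the collapse of $A$ onto $N$ and its lift through $q_{f_4}$ --- does not exist, because $N \not\subset A$.

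Recall how $A$ is defined: the line $l$ is chosen in Step~5 to be near $\bar f_4(B)$ and to separate $\bar f_4(B)$ from the parts that were lifted in the direction of $v$ in Step~4; $A'$ is the side of the cut containing $q_{f_4}(L)$, hence containing the band $B$, hence containing $q_{f_4}(L')$ (which is the other boundary of $B$) and its regular neighborhood $N$; and $A$ is the closure of the complement, i.e.\ only the top ends of the (extended) flappers and the lifted zigzag arcs. Thus $q_{f_4}(L')$, $N$, and even the attaching arcs $\al_p$ of the flappers all lie in $A'$, not in $A$. So ``$A$ deformation retracts onto $N \cup (\text{flappers, zigzags})$'' is not a statement about $A$ at all, and ``each flapper is a contractible piece attached to $N$'' is also off: the flappers are attached to $B$, and the cut $l$ separates their tops (in $A$) from their attaching arcs (in $A'$). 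In fact $A$ is a contractible $2$-complex, so it cannot deformation retract onto any circle; the generator of $\pi_1(q_{f_4}^{-1}(A)) \cong \Z$ comes from the circle fibers of $q_{f_4}$, not from a circle inside $A$. For the same reason, ``thickening the isotopy $\ep$'' from Lemma~\ref{boldlong} is not relevant here: $\ep(S^1 \times [0,1])$ lies almost entirely in $A'$ and meets $A$ only along $\si \subset \del A$.

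The paper's actual proof sidesteps all of this by arguing locally and explicitly: it decomposes $A$ (for each component of $L$) into copies of the ``corner'' CW complex of Figure~\ref{sarkok}, shows by inspection of the fibers that the $q_{f_4}$-preimage of each corner is diffeomorphic to $[0,1] \times D^2$ (a point fiber along the definite fold edge, circle fibers elsewhere, a solid cylinder minus a smaller one for the backward sheet, a solid cylinder for the forward sheet), and observes that gluing these pieces cyclically yields $L \times D^2$ respecting the $L \times S^1$ structure on the boundary; then Lemma~\ref{boldlong} pins down the isotopy class of the core. If you want to keep your ``solid torus by $3$-manifold topology'' route, you would instead need a correct argument that $\pi_1(q_{f_4}^{-1}(A)) \cong \Z$; a collapse of $A$ onto a circle will never supply this, because there is no such circle in $A$.
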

\begin{proof}
It is enough to show that 
$q_{f_4}^{-1}(A)$ is diffeomorphic to $L \x D^2$ 
extending naturally the $L \x S^1$ structure on its boundary
since
by Lemma~\ref{boldlong} the union of tori $\del q_{f_4}^{-1}(A)$ contains a longitude
isotopic to $L$.
Moreover it is enough to show that the $q_{f_4}$-preimage of the part of $A$ homeomorphic to 
the CW complex in Figure~\ref{sarkok} is diffeomorphic to
$[0,1] \x D^2$, where the $q_{f_4}$-preimage of the  two vertical edges on the right-hand side 
of the $2$-complex of Figure~\ref{sarkok}
corresponds to $\{ 0, 1 \} \x D^2$.
Clearly  the $q_{f_4}$-preimage of the  two vertical edges on the right-hand side 
is diffeomorphic  to $\{ 0, 1 \} \x D^2$ since
$q_{f_4}^{-1}(x)$ is a circle for any $x$ lying in the two vertical edges
except if $x$ is one of the two top ends.
If  $x$ is one of the two top ends, then $q_{f_4}^{-1}(x)$ is one point since
it is a definite fold singularity.
The $q_{f_4}$-preimage  of the backward sheet in Figure~\ref{sarkok}
is diffeomorphic to $[0,1] \x D^2$
minus $I \x D^2$ for an interval $I$.
The $q_{f_4}$-preimage of the forward sheet is diffeomorphic to
$I \x D^2$.
\end{proof}

\begin{cor}
Any longitudinal curve in $q_{f_4}^{-1}(\del A)$ is isotopic to $L$. 
\end{cor}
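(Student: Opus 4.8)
The plan is to obtain this as an essentially immediate consequence of Lemmas~\ref{boldlong} and~\ref{boldlongneigh}, the only extra input being the elementary fact that inside a solid torus all longitudes of the boundary are isotopic to one another. Recall that by Lemma~\ref{boldlongneigh} the preimage $q_{f_4}^{-1}(A)$ is isotopic to a regular neighborhood of $L$, so (componentwise) it is a solid torus with boundary the torus bundle $q_{f_4}^{-1}(\del A)$, and by Lemma~\ref{boldlong} \emph{some} longitude $\ga_0$ of this boundary is isotopic to $L$ in $S^3$.

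First I would record the model fact: in $S^1 \x D^2$ every simple closed curve on $\del (S^1 \x D^2)$ that meets a meridian disk exactly once is isotopic, through an isotopy supported in $S^1 \x D^2$, to the core $S^1 \x \{0\}$ --- one uses the $D^2$-factor to unwind the meridional winding, e.g.\ by shrinking the radius in the graph description $\{(\th, \ep e^{in\th})\}$ of the curve. Applying this to $q_{f_4}^{-1}(A)$, any longitudinal curve $\ga$ of $q_{f_4}^{-1}(\del A)$ is isotopic, inside $q_{f_4}^{-1}(A) \subset S^3$, to the core of the corresponding solid torus, and hence isotopic to $\ga_0$. Since $\ga_0$ is isotopic to $L$ by Lemma~\ref{boldlong}, so is $\ga$. (Put differently: the core of a tubular neighborhood of $L$ is isotopic to $L$, and every longitude of the neighborhood is isotopic to its core, hence to $L$.)

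I do not anticipate a real obstacle; this is a bookkeeping corollary that upgrades ``a longitude'' in Lemma~\ref{boldlong} to ``any longitude''. The one point deserving a word is that the product structure $q_{f_4}^{-1}(\del A) \cong J \x S^1$ used in the proof of Lemma~\ref{boldlong} and the structure $\del q_{f_4}^{-1}(A) \cong L \x S^1$ coming from Lemma~\ref{boldlongneigh} may differ by meridional twists, so the two notions of ``longitude'' need not literally coincide on the torus. This is harmless precisely because we are allowed to isotope inside the solid torus $q_{f_4}^{-1}(A)$: regardless of which trivialization names a given longitudinal curve, it collapses to the common core, and the claim follows.
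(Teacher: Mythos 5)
Your argument is correct and is the natural reading of what the paper leaves implicit: the corollary is stated without proof, as an immediate consequence of Lemmas~\ref{boldlong} and~\ref{boldlongneigh}, and the intended justification is exactly the one you give --- namely that $q_{f_4}^{-1}(A)$ is (componentwise) a solid torus with core isotopic to $L$, and every longitudinal curve on the boundary of a solid torus can be isotoped inside it to the core. Your remark about the two a priori different trivializations of the boundary torus being irrelevant once one is allowed to isotope through the solid torus is a sensible precaution, and the unwinding-by-shrinking-the-radius description of the isotopy is the standard argument.
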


In order to obtain the map $f_5$, we modify the map
$f_4|_{q_{f_4}^{-1}(A)} \co L \x D^2 \to \R^2$ outside a neighborhood
of $q_{f_4}^{-1}(\del A)$ as it is shown by
Figure~\ref{kinovesekcuspelimfelnovelatkot}: our goal is to have the
arrangement that if for a cusp singularity $q_1 \in S^3$ the point
$q_{f_5}(q_1)$ is connected in $W_{f_5} - A'$ to $\del A$ by a
$1$-cell $\ga$ mapped into $\R^2$ parallel to $v$ and $\ga$
corresponding to an indefinite fold curve, then a definite fold curve
should connect $q_1$ to another cusp $q_2$ with the same property for
$q_{f_5}(q_2)$.  Thus we obtain a map $f_5$ such that
$q_{f_5}^{-1}(W_{f_5} - A')$ is isotopic to a regular neighborhood of
$L$ by the same argument as in Lemma~\ref{boldlongneigh}.  Also
$q_{f_5}^{-1}(W_{f_5} - A')$ coincides with $q_{f_4}^{-1}(A)$ and
$f_5$ coincides with $f_4$ in a neighborhood of $q_{f_5}^{-1}(A')$.

We arrange the cusps of $f_5$ in 
$q_{f_4}^{-1}(A)$ 
to form pairs as follows.
In $W_{f_5}$ sheets are attached to $B$ 
along arcs  of type {\sc  (b)}
(possibly containing points of type {\sc (c)} at some endpoints).
Walking along the bands $B$ and
restricting ourselves to the intersection of the sheets and $W_{f_5} - A'$,
we have that
every sheet contains a pair of cusps and
every second sheet contains a singular arc of type {\sc (a)}
connecting its pair of cusps, for example, see Figure~\ref{kinovesekcuspelimfelnovelatkot}.

\begin{figure}[ht] 
\begin{center} 
\epsfig{file=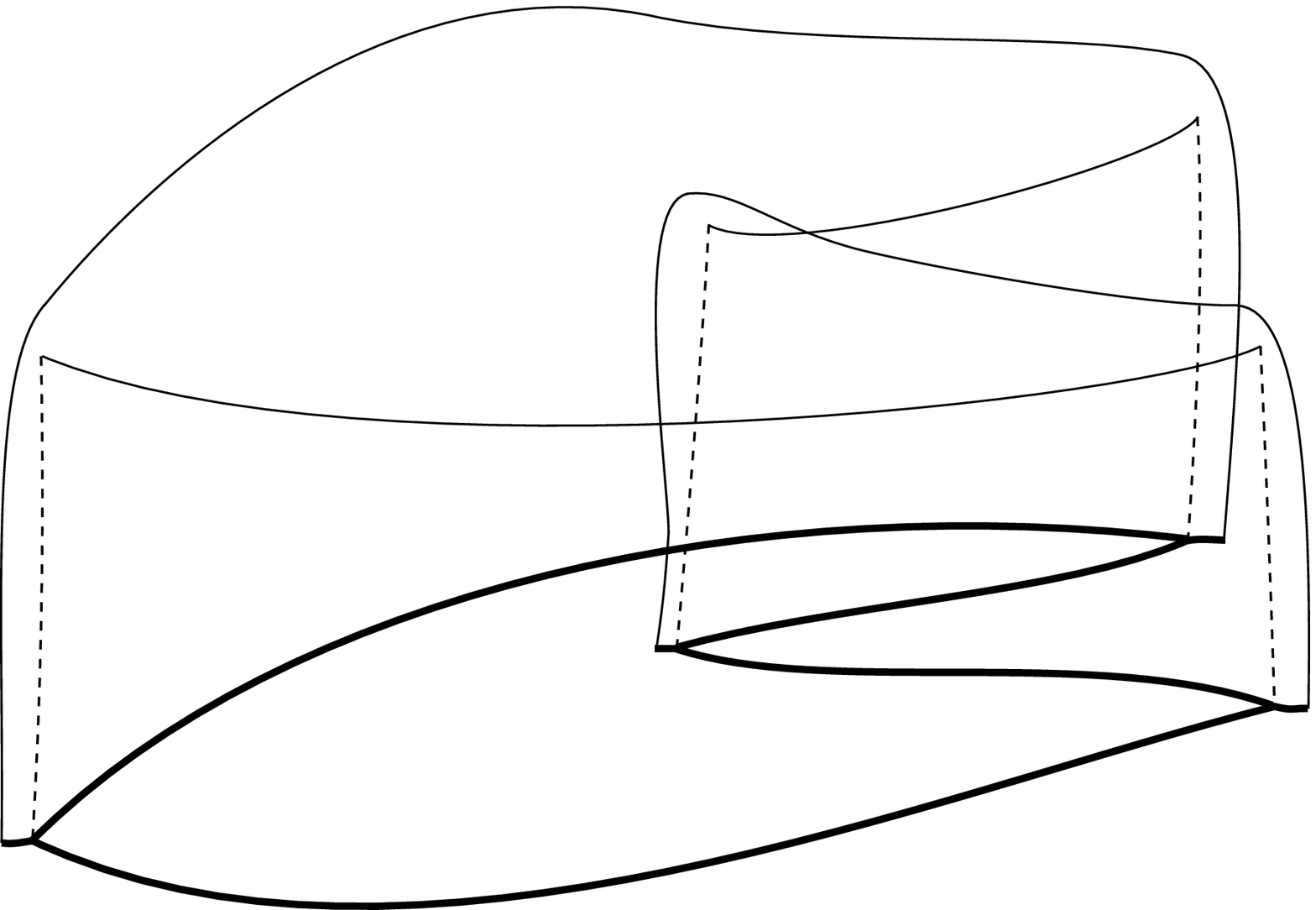, height=5cm} 
\end{center} 
\caption{{\bf The Stein factorization of $f_5|_{q_{f_5}^{-1}(W_{f_5} - A')} \co L \x D^2 \to \R^2$.}
There are two $\mathcal P$-pairs of cusps.}  
\label{kinovesekcuspelimfelnovelatkot}  
\end{figure}

A natural pairing  is that  two cusps
form a pair if they are in the same sheet and they are connected by 
a singular arc of type {\sc (a)}. We refer to this pairing as $\mathcal Q$-pairing.
We also define another pairing $\mathcal P$:
two cusps
form a $\mathcal P$-pair if they are in the same sheet and they are {\it not} connected by 
any singular arc of type {\sc (a)}.

\subsection*{Step 6}
In this step, we eliminate the cusps of $f_5$ contained in
$q_{f_5}^{-1}(W_{f_5} - A')$. These cusps are mapped by $f_5$ in the
direction of $v$ far from $\overline L$ and arranged into $\mathcal
P$-pairs in the previous step.  The restriction of the resulting map
$f_6 \co S^3 \to \R^2$ to a link isotopic to $L$ will be an
embedding. (Hence after this step the construction of the claimed map
$F$ on $M$ will be easy.)

We have exactly $|T|/2$ $\mathcal P$-pairs of cusps in $q_{f_5}^{-1}(W_{f_5} - A')$. Observe that
for each component of $L$ one
 $\mathcal P$-pair can be eliminated immediately: for example 
  in  Figure~\ref{kinovesekcuspelimfelnovelatkot}
 the pair on the ``highest'' sheet is in the sufficient 
position to eliminate. In the following, we deal with the other $\mathcal P$-pairs.

More concretely, we perform the deformations and the eliminations
of the pairs of cusps of $f_5$  in $q_{f_4}^{-1}(A)$
 as shown in Figure~\ref{cuspcsere} as follows.
 
  \begin{figure}[ht] 
\begin{center} 
\epsfig{file=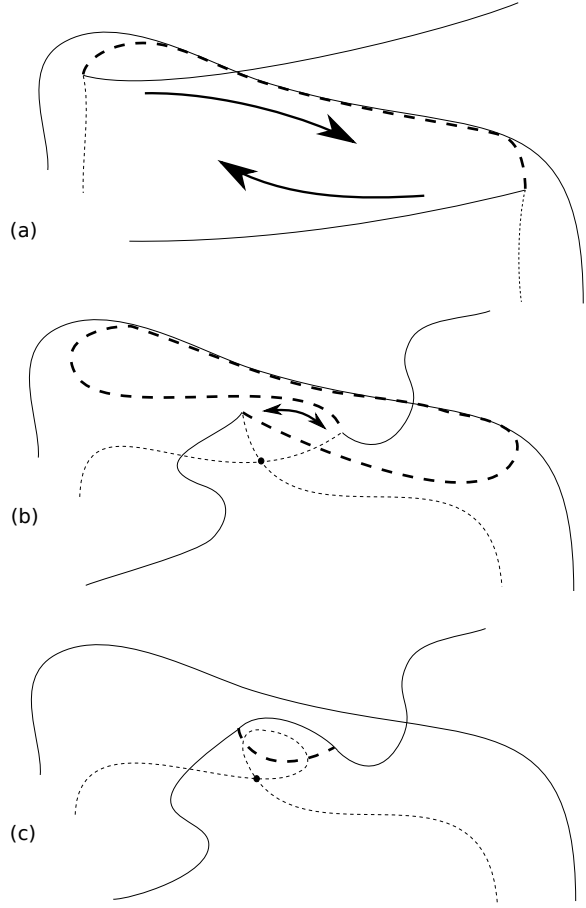, height=12cm} 
\end{center} 
\caption{{\bf Moving and eliminating the cusps.} 
We move and eliminate the $\mathcal P$-pair of cusps along the arrows.
The dashed arcs represent
$1$-complexes used to deform $\si$  in the proof of Lemma~\ref{vegsoiso}.}  
\label{cuspcsere}  
\end{figure} 

  First, by using  Lemma~\ref{cuspmovelem} we   move 
 each pair of cusps 
having the position as in Figure~\ref{cuspcsere}(a) 
to the position as in Figure~\ref{cuspcsere}(b) 
thus creating a singularity of type {\sc  (d)}. Then 
by using  Lemma~\ref{cuspelimlem}
we eliminate each pair of cusps, see
Figures~\ref{cuspcsere}(b) and \ref{cuspcsere}(c).

The resulting map will be denoted by $f_6$ (see
Figure~\ref{vegso}). Notice that $f_6$ and $f_5$ coincide in a
neighborhood of $q_{f_5}^{-1}(A')$.  The deformations above yield
definite fold curves $K \subset S^3$, whose image under $f_6$ is an
embedding into $\R^2$ as indicated in Figure~\ref{vegso} by the bold
curve.

\begin{figure}
\begin{center} 
\epsfig{file=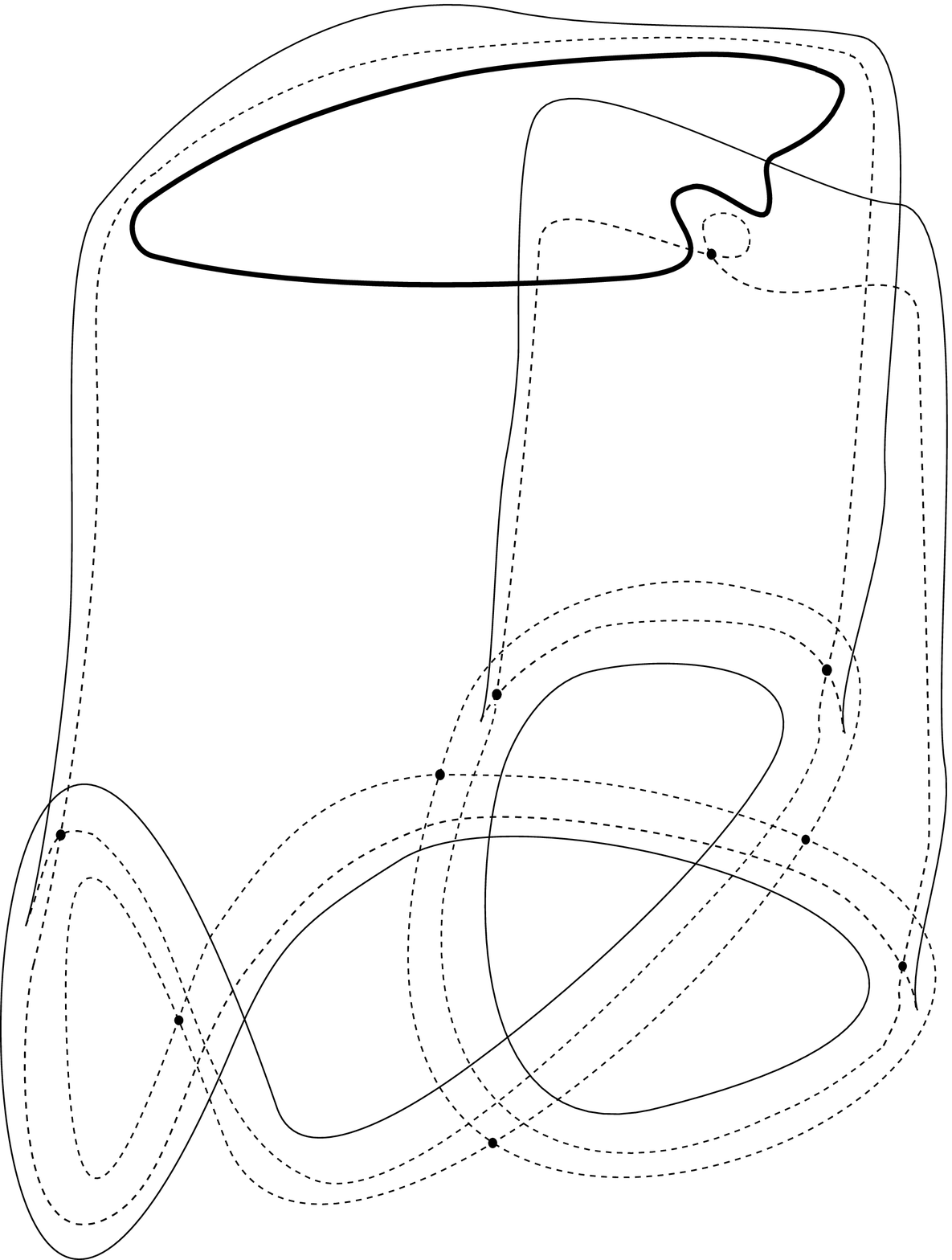, height=13cm} 
\end{center} 
\caption{{\bf The Stein factorization of the stable map $f_6 \co S^3
    \to \R^2$.} (The circle $f_6(C)$ is omitted.)}
\label{vegso}  
\end{figure}

\begin{lem}\label{vegsoiso}
The link $K$ is isotopic to $L$.
\end{lem}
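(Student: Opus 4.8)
The plan is to track the definite fold curve $K$ through the local deformations of Step~6 and argue that each such deformation changes $K$ only by an isotopy in $S^3$, while relating the final position of $K$ to the already-understood link $q_{f_5}^{-1}(W_{f_5}-A')$, which by the argument of Lemma~\ref{boldlongneigh} is a regular neighborhood of $L$. Concretely, recall from Step~5 that $q_{f_5}^{-1}(W_{f_5}-A')$ coincides with $q_{f_4}^{-1}(A)$, a solid-torus neighborhood $N_L$ of $L$ with $\del N_L$ carrying a longitude isotopic to $L$ (Lemma~\ref{boldlong} and its Corollary). The curve $K$ arising after Step~6 lies inside this neighborhood, so it suffices to show that $K$ represents a longitudinal curve, i.e.\ that $K$ is isotopic in $N_L$ to the core $L'$ (hence to $L$ in $S^3$), rather than being nullhomotopic or wrapping the wrong way.

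First I would set up the homological/isotopy bookkeeping inside $q_{f_4}^{-1}(A)\cong L\times D^2$. The $\mathcal P$-pairs of cusps were arranged in Step~5 so that a definite fold arc connects the two cusps of each pair; walking along each band $B$, these definite fold arcs, together with the already-present definite fold arcs of type~{\sc (a)} on every second sheet, concatenate into a single closed curve per component of $L$. I would describe this concatenation carefully using Figure~\ref{vegso}: the curve $K$ is homotopic in $W_{f_5}$ to the $1$-complex $\iota$ of type~{\sc (b)} arcs in $B$ (the same complex appearing in the proof of Lemma~\ref{boldlong}), because each cusp-pair/definite-fold segment of $K$ can be pushed across the sheet it lies in onto the band $B$. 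Since $\iota$ is isotopic to $q_{f_4}(L')$ by Lemma~\ref{boldlong}, a lift argument — identical in spirit to the disk-lifting argument in the proof of Lemma~\ref{boldlong}, lifting the homotopy across the sheets and across neighborhoods of type~{\sc (b)}, {\sc (c)} and {\sc (d)} points — shows $K$ is isotopic in $S^3$ to a lift of $q_{f_4}(L')$, and any such lift is isotopic to $L'$, hence to $L$.

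Next I would verify that the local moves of Step~6 are genuinely ambient isotopies of the relevant portions of $K$ (or that the "new" definite fold curves they create are isotopic to the old ones rel the part of $K$ outside the move). Lemma~\ref{cuspmovelem} deforms $f_5$ only in a small tubular neighborhood of an arc $\be$, and outside $N_\be$ the map (hence its singular set) is unchanged; the effect on $K$ is to drag a sub-arc across $f(\al)$, which is a local isotopy of $K$ in $S^3$. Lemma~\ref{cuspelimlem} is Levine's cusp elimination, which likewise is supported in a neighborhood of an arc joining the two cusps and replaces a pair of cusps by a definite fold arc; I would invoke the known fact from \cite{Lev0} that this too is realized by a homotopy of $f$ supported in such a neighborhood, so $K$ changes only by an isotopy there. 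Concatenating over the $|T|/2$ pairs (and the one "free" $\mathcal P$-pair per component eliminated immediately), $K$ after Step~6 is ambient isotopic in $S^3$ to the definite fold curve $K_5$ one would read off from $f_5$ before the eliminations, which in turn lies in $q_{f_5}^{-1}(W_{f_5}-A')=N_L$ and is homotopic to $\iota$ as above.

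The main obstacle I anticipate is making the homotopy-to-isotopy upgrade rigorous: showing that $K$, which is only manifestly \emph{homotopic} in $W_{f_5}$ (a $2$-complex, not a manifold) to $\iota\simeq q_{f_4}(L')$, actually lifts to an \emph{ambient isotopy} in $S^3$. The delicate point is that the homotopy in $W_{f_5}$ passes through the non-simple type~{\sc (d)} points, where $q_{f_5}$ is two-to-one in a complicated way, so one cannot naively push $K$ off itself; one must check, exactly as in the proof of Lemma~\ref{boldlong}, that the relevant tracks of the homotopy can be covered by the disks $\tilde D_j$ in $q_{f_5}^{-1}(U_j)$ (described via the model $R\times[0,1]$ of Figure~\ref{nonsimpleanal}) and that these disks assemble, using the absence of constraints at regular points and the controlled behavior near type~{\sc (b)} and {\sc (c)} points, into an embedded annulus in $S^3$ between $K$ and a lift of $q_{f_4}(L')$. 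Once that annulus is in hand, Lemma~\ref{boldlong}'s conclusion that any lift of $\la$ (equivalently of $q_{f_4}(L')$) is isotopic to $L'$, combined with the isotopy $L'\sim L$ from Step~1, completes the proof.
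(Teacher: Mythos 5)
Your proposal follows the same basic strategy as the paper: identify $L$ with a longitude of $q_{f_4}^{-1}(\del A)$ via Lemma~\ref{boldlong}, relate $K$ to a curve in the $2$-complex isotopic to $q_{f_4}(L')$, and lift the resulting isotopy to $S^3$. The paper, however, runs the argument in the opposite direction: it starts from the subcomplex $\si \subset \del A$ (already known to lift to a curve isotopic to $L$), PL-isotopes it forward to a curve $\si'$ running through the type~{\sc (a)} arcs of the $\mathcal Q$-pairing and the tops of the sheets, and then tracks the pieces $\si'' \to \si''' \to \si''''$ through the cusp moves and eliminations of Figure~\ref{cuspcsere} until the curve coincides with the type~{\sc (a)} arcs forming $K$. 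The decisive technical device is that a part of $\si''$ is collinear to a definite fold arc, which makes any two lifts of $\si''$ to $S^3$ isotopic; this is what lets one glue the local isotopies of lifts across the successive deformations without ambiguity, and it is simpler than re-running the full disk-gluing machinery of Lemma~\ref{boldlong} as you propose. Your third paragraph contains a genuine mis-step: there is no closed ``definite fold curve $K_5$'' of $f_5$ inside $q_{f_4}^{-1}(A)$ to which $K$ could be compared, since before the eliminations of Step~6 the definite fold arcs in $q_{f_4}^{-1}(A)$ are open arcs terminating at cusps, not a link; the closed curve $K$ only comes into being through the eliminations. The paper sidesteps this entirely by tracking $\si$ rather than any prior incarnation of $K$. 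That said, you correctly identify the crux of the whole proof — upgrading the homotopy in the $2$-complex to an ambient isotopy in $S^3$, in particular controlling the lift through neighborhoods of type~{\sc (d)} points where $q$ is not locally a homeomorphism — and your proposed route (re-using the $\tilde D_j$ disk-lifting analysis from Lemma~\ref{boldlong}) would also work, just at greater expense than the paper's ``collinear to a type~{\sc (a)} arc'' shortcut.
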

\begin{proof}
By Lemma~\ref{boldlong} the link $L$ is isotopic to a longitude of the
union of tori $q_{f_4}^{-1}(\del A)$.  In Step $6$ we modify $f_5$
only inside $q_{f_4}^{-1}(A)$.  The subcomplex $\si$ of $\del A$ used
in the proof of Lemma~\ref{boldlong} is PL-isotopic to a
$1$-dimensional PL submanifold $\si'$ of $W_{f_5} - A'$ such that
$\si'$ goes through the singular curves of type {\sc (a)} appearing in
the $\mathcal Q$-pairing at the end of Step 5 and goes through the top
of $W_{f_5} - A'$, i.e.\ the top of the $2$-complex in
Figure~\ref{kinovesekcuspelimfelnovelatkot}.  To be more precise, in
Figure~\ref{cuspcsere}(a) the part of $\si'$ connecting the two cusp
endpoints of the singular arcs of type {\sc (a)} is represented by a
bold dashed arc and denoted by $\si''$.  During the moving of the pair
of cusps as depicted by the arrows in Figure~\ref{cuspcsere}(a),
$\si''$ is deformed to the curve $\si'''$ represented by a bold dashed
arc in Figure~\ref{cuspcsere}(b).  This deformation gives an isotopy
between some liftings to $S^3$ of $\si''$ and $\si'''$.  Since a part
of $\si''$ is collinear to a singular arc of type {\sc (a)} as we can
see in Figure~\ref{cuspcsere}(a), any lifting to $S^3$ of $\si''$ is
isotopic to any other lifting.  Hence further deforming $\si'''$ to
$\si''''$ represented by the bold dashed curve in
Figure~\ref{cuspcsere}(c) yields an isotopy between some liftings of
$\si''$ and $\si''''$.  Finally, changing again the lifting to $S^3$
of $\si''''$ if necessary, we eliminate the pair of cusps as indicated
in Figure~\ref{cuspcsere}(b) and deform $\si''''$ to be identical to
the type {\sc (a)} singular arc appearing at the elimination in
Figure~\ref{cuspcsere}(c).  All this process gives an isotopy in $S^3$
between $K$ and a lifting of $\si$, hence an isotopy between $K$ and
$L$.
\end{proof}

\subsection*{Step 7}
As a final step, we perform the given surgeries along $K$ with the
appropriate coefficients.  Since $f_6|_K$ is an embedding into $\R^2$
on each component of $K$, and $K$ consists of definite fold singular
curves such that the local image of a small neighborhood of the
definite fold curve is situated ``outside'' of the image of the
definite fold curve, a map of $M$ is particularly easy to construct: a
small tubular neighborhood $N_K$ of $K$, which is diffeomorphic to $K
\x D^2$, is glued back to $S^3- {\rm {int}}\, N_K$ such that $\{ pt.\}
\x \partial D^2$ maps to a longitude in $\del(M- {\rm {int}}\, N_K)$, hence $N_K$
can be mapped into $\R^2$ as the projection $\pi \co K \x D^2 \to
D^2$.  This $\pi$ extends over $M- {\rm {int}}\, N_K$ and the resulting map $M \to
\R^2$ is stable. Let us denote it by $F$.
  
It is easy to see that $F$ has the claimed 
properties:
\subsubsection*{The Stein factorization $W_F$ is homotopy equivalent 
to the bouquet $\bigvee_{i=1}^{{\rm {n}}( L)} S^2$:}
The Stein factorization $W_{f_4}$ is clearly 
contractible. The CW-complexes $W_{f_5}$ and $W_{f_6}$ are still contractible since the corresponding
steps do not change the homotopy type. At the final surgery
we attach  a $2$-disk to $W_{f_6}$
for each component of $L$.

 \subsubsection*{
The number of cusps of $F$ is equal to ${\rm t}_v( \overline L )$:} Each
 point in $f_1(L')$ at which $f_1(L')$ is tangent to the chosen
 general position vector $v$ (these are exactly the points of the set
 $\bar f_1(T))$ corresponds to a cusp of $F$ by the construction and
 there are no other cusps. $|T| = {\rm t}_v( \overline L )$ hence we get
 the statement.

\subsubsection*{All the non-simple singularities of $F$ are
 of type {\rm \sc (d)}:}
 This follows from the fact  that singularities of type {\sc  (e)} never appear 
 during the construction.
 
 \subsubsection*{The number of the 
 non-simple singularities of $F$
   is equal to 
${\rm cr}(  \overline L ) + \frac{3}{2}{\rm t}_v(  \overline L ) -{\rm n}(L)$:}
Each crossing of the diagram $\overline L$ gives a singularity of type {\sc (d)}.
Also each point in $T$ gives a singularity of type {\sc (d)} by the construction.
Finally, the movement illustrated in Figure~\ref{cuspcsere}(b)
gives one singular point of type {\sc (d)} for each pair of points in $T$ except one pair
for each component of $L$.

\subsubsection*{The number 
of non-simple singularities which are not connected by any  singular arc of type {\sc (b)}
to any cusp is equal to ${\rm cr}(  \overline L ) + \frac{1}{2}{\rm t}_v(  \overline L ) -{\rm n}(L)$:}

In the previous argument, if we do not count the singularities of type {\sc (d)} corresponding to
the $v$-tangencies of $f_1(L')$, then we get the statement.

\subsubsection*{The number of simple singularity crossings of $F$ in $\R^2$ is no more than
$8{\rm cr}(  \overline L ) + 6\ell(\overline L, v){\rm t}_v(  \overline L ) + {\rm t}_v(  \overline L )^2$:}
We can suppose that the number of simple singularity crossings of $f_4|_{q_{f_4}^{-1}(A'))}$
is no more than 
$8{\rm cr}(  \overline L ) + 2{\rm t}_v(  \overline L ) + 6\ell(\overline L, v){\rm t}_v(  \overline L )$.
The maps $f_4$, $f_5$, $f_6$ and $F$ coincide in a neighborhood of
$q_{f_4}^{-1}(A')$ and also their images coincide 
in the half plane bounded by the line $l$ and lying in the direction $-v$ (for the notations, see Step 5). 
The simple singularity crossings of $F$
in $F(q_{f_4}^{-1}(A))$ come from the intersections of the $\bar F$-images of the
``sheets'' attached to the bands $B \subset W_F
$ (for the notation, see Step 2). For example, in Figure~\ref{vegso},
two such  sheets intersect on the right-hand side  in four simple singularity crossings.
Hence we obtain an upper bound for the 
number of simple singularity crossings of $F$ in $F(q_{f_4}^{-1}(A))$ 
if we suppose that all the sheets intersect each other in eight crossings.
This gives the upper bound 
$$8\left( \frac{{\rm t}_v(  \overline L )}{2} -1 +  \frac{{\rm t}_v(  \overline L )}{2} -2 + \cdots +1 \right )=
4\frac{{\rm t}_v(  \overline L )}{2} \left (\frac{{\rm t}_v(  \overline L )}{2} -1 \right ) =
 {\rm t}_v(  \overline L )^2 - 2{\rm t}_v(  \overline L ).$$
Thus we obtain the upper bound 
$$8{\rm cr}(  \overline L ) + 2{\rm t}_v(  \overline L ) + 6\ell(\overline L, v){\rm t}_v(  \overline L ) +  {\rm t}_v(  \overline L )^2 - 2{\rm t}_v(  \overline L ) = 
8{\rm cr}(  \overline L ) + 6\ell(\overline L, v){\rm t}_v(  \overline L ) +  {\rm t}_v(  \overline L )^2$$
for all the simple singularity crossings of $F$.

 \subsubsection*{
The number of connected components of the singular set of $F$ is no more than ${\rm {n}}( L) + \frac{3}{2}{\rm t}_v(\overline L) +1$:}
The curve $C$ is a component and
the links $L$ and $L'$ give singular set components as well. Also 
the cusp elimination in Step 3 gives additional ${\rm t}_v(\overline L)$ components.
Steps 4 and 5 clearly do not increase more the number of singular set components.
In Step 6 the changings showed in Figure~\ref{cuspcsere}  increase 
the number of components by at most $\frac{1}{2}{\rm t}_v(\overline L)$. 
Finally Step 7 decreases it by ${\rm {n}}( L)$.

 \subsubsection*{
The maximal number of the connected components of any fiber of $F$ is no 
more than ${\rm t}_v(  \overline L ) + 3$:}
The maximal number of the connected components of any fiber of $f_1$
is $3$. This value is no more than $3+{\rm t}_v(  \overline L )$ for $f_2$, \ldots, $f_5$
and also for $f_6$. When we perform the surgery in Step 7,
$3+{\rm t}_v(  \overline L )$ is still an upper bound hence we get the statement.

 \subsubsection*{
The indefinite fold singular set of $F$:}
Finally the statement of (8) about the indefinite fold singular set of $F$   
 is obvious from the construction. This finishes the proof of Theorem~\ref{fothm}.
\end{proof}

\begin{rem}
Suppose we have two links in $S^3$.
If the projections of the two links coincide, then the resulting 
stable maps 
on the two $3$-manifolds
in the construction described above  will have the same Stein factorizations.
Therefore only the Stein factorization itself is a very week invariant of the $3$-manifold.\footnote{The paper \cite{Sa95} is closely related to this remark.}
\end{rem}

\begin{proof}[Proof of Theorem~\ref{foldcsak(e)}]
Let $M$ be a closed orientable $3$-manifold obtained by an integral surgery along a link in $S^3$.
Theorem~\ref{fothm} gives a stable map $F$ of $M$ into $\R^2$ without singularities of type {\sc  (e)}.
We can eliminate the cusps of $F$ without introducing any singularities of type {\sc  (e)}.
Indeed, the map constructed by Theorem~\ref{fothm} has an even number of cusps,
whose $q_F$-image is situated in $B \subset W_F$. 
Moreover since the locations of the $F$-images of the cusps
are at the $v$-tangencies of $\overline L$, 
each cusp $c$ has a pair $c'$
which can be moved close to $c$ (thus possibly creating  new singular points
of type {\sc (d)})  and can be used to eliminate these pairs
in the sense of Lemmas~\ref{cuspmovelem} and \ref{cuspelimlem}.
\end{proof}

\begin{rem}
By \cite{ElMi} every closed orientable $3$-manifold
has a wrinkled map into $\R^2$ since any orientable $3$-manifold
is parallelizable. This argument  leads to another proof of 
Theorem~\ref{foldcsak(e)}. However, the $h$-principle used  in the proof of the results in 
\cite{ElMi}
does not provide any construction for the wrinkled map.
\end{rem}

Next we give the proof of the estimate given in \eqref{eq:egysz} in Section~\ref{sec:intro}.
\begin{lem}\label{egyszerusit}
$\ell(\overline L, v) \leq {\rm t}_v(  \overline L ) -1$.
\end{lem}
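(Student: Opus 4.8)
The plan is to turn the statement into a Morse‑theoretic count for the projection to $\R v^{\bot}$. Normalize coordinates so that $v=(0,1)$; then projecting $\overline L$ to $\R v^{\bot}$ along $v$ is just the $x$‑coordinate function, a $v$‑tangency is exactly a critical point of $x|_{\overline L}$, and by the general position hypothesis $x|_{\overline L}$ is a Morse function, so ${\rm t}_v(\overline L)$ equals its number of critical points. Since $\overline L$ is a union of immersed circles, along each circle the local maxima and minima of $x$ alternate, so ${\rm t}_v(\overline L)$ is even and $x|_{\overline L}$ has exactly ${\rm t}_v(\overline L)/2$ local minima. Fix a $v$‑tangency $p$ attaining $\ell(\overline L,v)$ and put $p=(0,0)$; after reflecting in the $y$‑axis if necessary (this fixes $v$, the ray, and all relevant counts) we may assume $p$ is a local minimum of $x|_{\overline L}$, so near $p$ the curve $\overline L$ lies in $\{x\ge 0\}$ and meets the line $\{x=0\}$ only at $p$.

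First I would introduce the width function $N(s)=\#(\overline L\cap\{x=s\})$ for regular values $s$ of $x|_{\overline L}$. The $\ell=\ell(\overline L,v,p)$ transversal intersection points of the ray $\{(0,t):t>0\}$ with $\overline L$ are points where $\overline L$ crosses the line $\{x=0\}$ transversally, and (being on the ray) they are not double points of $\overline L$; hence for all small regular values $-\ep<0$ the line $\{x=-\ep\}$ meets $\overline L$ at $\ell$ distinct nearby points, so $N(-\ep)\ge \ell$.

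Next I would run the standard sweep: $N(s)=0$ for $s\ll 0$ since $\overline L$ is bounded, and as $s$ increases $N$ jumps by $+2$ when $s$ passes the critical value of a local minimum of $x|_{\overline L}$ and by $-2$ at the critical value of a local maximum. Therefore, for small $\ep>0$, one gets $N(-\ep)=2a-2b\le 2a$, where $a$ (resp.\ $b$) is the number of local minima (resp.\ maxima) of $x|_{\overline L}$ with critical value $<0$. A point to check here is that components of $\overline L$ lying entirely in $\{x<0\}$ do not spoil this (each such component has all its critical values negative, so for $\ep$ small the line $\{x=-\ep\}$ either misses it or its contribution is already included), and that the bound $N(-\ep)\le 2a$ survives even if several critical points share a critical value.

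Finally I would combine the two estimates. Since $p$ is a local minimum of $x|_{\overline L}$ with critical value $0$, it is not counted in $a$, so $a\le {\rm t}_v(\overline L)/2-1$; hence $\ell\le N(-\ep)\le 2a\le {\rm t}_v(\overline L)-2$, which already implies the asserted $\ell(\overline L,v)\le {\rm t}_v(\overline L)-1$ after taking the maximum over all $v$‑tangencies $p$. I do not expect a serious obstacle; the only care needed is the bookkeeping that $N(-\ep)$ for small $\ep$ simultaneously detects all $\ell$ ray‑crossings (giving $N(-\ep)\ge\ell$) and is controlled by the Morse data of $x|_{\overline L}$ below level $0$ (giving $N(-\ep)\le 2a$), together with the harmless treatment of components contained in $\{x<0\}$ and of possibly coincident critical values.
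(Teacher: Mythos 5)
Your argument is correct and complete, but it takes a genuinely different route from the paper. The paper's one-line proof traverses the components of $\overline L$ and pairs each transversal intersection of the ray with a $v$-tangency encountered along the way, with $p$ itself being the one tangency left unpaired; this is quick but leaves the pairing (and its injectivity, and why exactly $p$ is excluded) implicit. Your version instead runs the dual, line-sweep Morse count: after normalizing $v=(0,1)$ and making $p$ a local minimum of $x|_{\overline L}$ at level $0$, you bound $\ell(\overline L,v,p)$ from below the level of $p$ by the width $N(-\ep)$, and bound $N(-\ep)$ by twice the number $a$ of local minima of $x|_{\overline L}$ with critical value $<0$, using $N(-\ep)=2a-2b$ and $a\le {\rm t}_v(\overline L)/2-1$. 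This makes the bookkeeping fully explicit (components entirely in $\{x<0\}$, coincident critical values, and distinctness of the $\ell$ nearby points on $\{x=-\ep\}$ are all handled cleanly), and in fact it yields the slightly sharper bound $\ell(\overline L,v)\le {\rm t}_v(\overline L)-2$, from which the stated inequality $\ell(\overline L,v)\le {\rm t}_v(\overline L)-1$ follows immediately. So: same underlying combinatorics seen from a transverse perspective, with your sweep giving a more rigorous and marginally stronger conclusion than the paper's curve-traversal sketch.
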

\begin{proof}
For any $v$-tangency $p$ we have
$\ell(\overline L, v, p) \leq {\rm t}_v(  \overline L ) -1$ since
by going along the components of $L$ in the diagram $\overline L$,
 in order to pass through the intersections of the half line
 emanating from $p$ in the direction of $v$,
 for each intersection
 one needs to pass through a $v$-tangency as well.
 \end{proof}

\subsection{Estimates for $TB^-$}

Recall that the Thurston-Bennequin number ${\rm tb}(\Legknot )$ of a
Legendrian knot $\Legknot $ can be computed 
through the simple formula
$$
{\rm tb}(\Legknot )=w(\Frontproj )-\frac{1}{2}(\# cusps (\Frontproj)).
$$

\begin{proof}[Proof of Theorem~\ref{nagyTBbecsles}]
By Theorem~\ref{fothm} (5) and Lemma~\ref{egyszerusit} 
we have 
\[
{\rm s}(F) \leq 8{\rm cr}( \overline L ) + 7{\rm {t}}_v(\overline L)^2 - 6{\rm
  {t}}_v(\overline L)\] for the constructed stable map $F$. (Here, again,
$\overline L$ denotes the generic projection of the knot $L$ we get from
the front projection of the Legendrianization $\Legknot$ of $L$ by
rounding the cusps.)  Since ${\rm d}(F) = {\rm s}(F) + {\rm ns}(F)$,
by Theorem~\ref{fothm} (3), (5) and Lemma~\ref{egyszerusit} we have
$${\rm d}(F) \leq 9{\rm cr}( \overline L ) + 7{\rm {t}}_v(\overline L)^2 -
\frac{9}{2}{\rm {t}}_v(\overline L) -{\rm n}(L).$$

If $\Frontproj$ has only negative crossings, then
the Thurston-Bennequin number ${\rm tb}(\Legknot )$ is equal to
$- {\rm cr}(  \overline L ) - \frac{1}{2} {\rm {t}}_v(\overline L)$, where
$v$ is the vector in which the front projection has no tangency.

Hence
$$28{\rm tb}(\Legknot )^2 = 28{\rm cr}( \overline L ) ^2 + 28{\rm cr}( \overline L
) {\rm {t}}_v(\overline L) + 7{\rm {t}}_v(\overline L)^2$$ and
$$28{\rm cr}( \overline L ) ^2 + 28{\rm cr}( \overline L ) {\rm {t}}_v(\overline L) +
7{\rm {t}}_v(\overline L)^2 \geq 9{\rm cr}( \overline L ) + 7{\rm {t}}_v(\overline
L)^2 - \frac{9}{2}{\rm {t}}_v(\overline L) -{\rm n}(L).$$

Thus $|{\rm tb}(\Legknot )| \geq \frac{\sqrt{{\rm d}(F)}}{\sqrt{28}}$,
implying (by the fact that ${\rm tb}(\Legknot )$ is negative for a
knot admitting a projection with only negative crossings)
\begin{equation}\label{dequ}
{\rm tb}(\Legknot ) \leq -\frac{\sqrt{{\rm d}(F)}}{\sqrt{28}}.
\end{equation}

Also by Theorem~\ref{fothm}  (4), we have
$$|{\rm tb}(\Legknot)| = {\rm cr}( \overline L ) + \frac{1}{2} {\rm
  {t}}_v(\overline L) \geq {\rm nsnc}( F ) + 1,$$ which gives
\begin{equation}\label{nsncequ}
{\rm tb}(\Legknot ) \leq - {\rm nsnc}( F ) - 1.
\end{equation}

Finally note that ${\rm d}(F) \geq {\rm s}(F)$ for any stable map $F$,
and by taking the minimum for all the stable maps in (\ref{dequ}) and
(\ref{nsncequ}), we get the statement.
\end{proof}

\end{document}